\title{}
\author{}
\date{}
\newtheorem{theorem}{Theorem}[section]
\newtheorem{corollary}[theorem]{Corollary}
\newenvironment{proof}[1][Proof]{\begin{trivlist}
\item[\hskip \labelsep {\bfseries #1}]}{\end{trivlist}}
\newenvironment{definition}[1][Definition]{\begin{trivlist}
\item[\hskip \labelsep {\bfseries #1}]}{\end{trivlist}}
\newenvironment{@abssec}[1]{%
\if@twocolumn
\section*{#1}%
\else
\vspace{.05in}\footnotesize
\parindent .2in
{\upshape\bfseries #1. }\ignorespaces
\fi}
{\if@twocolumn\else\par\vspace{.1in}\fi}
\newcommand{\qed}{\nobreak \ifvmode \relax \else
\ifdim\lastskip<1.5em \hskip-\lastskip
\hskip1.5em plus0em minus0.5em \fi \nobreak
\vrule height0.75em width0.5em depth0.25em\fi}
\newcommand\keywordsname{Key words}
\newcommand\AMSname{AMS subject classifications}
\newcommand{\Q}{\mbox{WB}}
\newcommand{\WB}{\mbox{WB}}
\newcommand{\torus}{{\mathbb{T}^d}}
\begin{document}
\title{Quantitative Quasiperiodicity}
\author{Suddhasattwa Das\footnotemark[1], \and Yoshitaka Saiki\footnotemark[2],
\and Evelyn Sander\footnotemark[4], \and James A Yorke\footnotemark[3]}
\footnotetext[1]{Department of Mathematics, University of Maryland, College Park}
\footnotetext[2]{Graduate School of Commerce and Management, Hitotsubashi University}
\footnotetext[3]{University of Maryland, College Park}
\footnotetext[4]{Department of Mathematical Sciences, George Mason University}

\date{\today}
\maketitle

\renewcommand*\contentsname{Contents}

\begin{abstract}
{ The Birkhoff Ergodic Theorem concludes that time averages, that is,
Birkhoff averages, $\Sigma_{n=1}^N f(x_n)/N$ of a function $f$ along an
ergodic trajectory $(x_n)$ of a function $T$ converges to the space
average $\int f d\mu$, where $\mu$ is the unique invariant probability
measure. 
Convergence of the time average to the space average is slow.
We introduce a modified average of $f(x_n)$ by giving very small weights to the
``end'' terms when $n$ is near $0$ or $N$. 
When $(x_n)$ is a trajectory
on a quasiperiodic torus and $f$ and $T$ are $C^\infty$, we show that 
our weighted Birkhoff averages converge ``super'' fast to $\int f d\mu$,
{\em i.e.} with error smaller than every polynomial of $1/N$. Our goal
is to show that our weighted Birkhoff average is a powerful
computational tool, and this paper illustrates its use for several
examples where the quasiperiodic set is one or two dimensional. In
particular, we compute rotation numbers and conjugacies (i.e. changes
of variables) and their Fourier series, often with 30-digit precision.}
\end{abstract}

\section{Introduction} 

Quasiperiodicity is a key type of observed
dynamical behavior in a diverse set of applications. Tori with
quasiperiodic motion persist for small perturbations by the
Kolmogorov-Arnold-Moser theory, but such behavior is also observed for
non-conservative systems well beyond this restricted regime. We believe
that quasiperiodicity is one of only three types of dynamical behaviors
occurring in basic sets of typical systems. See~\cite{sander:yorke:15}
for the statement of our formal conjecture of this basic set
triumvirate. For example, quasiperiodicity occurs in a system of weakly
coupled oscillators, in which there is an invariant smooth attracting
torus in phase space with behavior described exclusively by the phase
angles of rotation of the system. Indeed, it is the property of the
motion being described using only a set of phase angles that always
characterizes quasiperiodic behavior. In a now classical set of papers,
Newhouse, Ruelle, and Takens demonstrated a route to chaos through a
region with quasiperiodic behavior, causing a surge in the study of the
motion~\cite{newhouse:ruelle:takens:78}. There is active current
interest in development of a systematic numerical and theoretical
approach to bifurcation theory for quasiperiodic systems. 

Our goal in this paper is to present a fast numerical method for the
fast calculation of the limit of Birkhoff averages in quasiperiodic systems, allowing us to compute
various key quantities. If $f$ is
integrable and the dynamical system is ergodic on the set in which the trajectory lives, then the Birkhoff Ergodic Theorem asserts that the Birkhoff average $ \Sigma_{n=1}^N f(x_n)/N $ of a function $f$ along an ergodic
trajectory $(x_n)$ converges to the space average $\int f d\mu$ as
$N\to\infty$ for $\mu$-almost every $x_0$, where $\mu$ is the unique invariant probability measure.
We develop a numerical method for calculating the limit of such averages, where
instead of weighting the terms $f(x_n)$ in the average equally, we
weight the early and late terms of the set $\{1,\dots,N\}$ much less
than the terms with $n\sim N/2$ in the middle. That is, rather than
using the equal weighting $(1/N)$ in the Birkhoff average, we use a
weighting function $w(n/N)$, which will primarily be the following well
known $C^\infty$ function that we will call the {\bf exponential
weighting function}, $w_{\exp}(t) = \exp(1/(t(t-1))$. In a companion
paper~\cite{Das-Yorke}, it is rigorously shown that for quasiperiodic systems and $C^\infty$ $f$, this weighting
function leads to super convergence with respect to $N$, meaning faster
than any polynomial in $N^{-1}$. This super convergence arises from the
fact that we are taking advantage of the quasiperiodic nature of the map
or flow. In particular, our method uses the underlying structure of a
quasiperiodic system, and would not give improved convergence results
for chaotic systems. We demonstrate the method and its convergence rate
by computing rotation numbers, conjugacies, and their Fourier series in
dimensions one and two. We will refer to a 1D quasiperiodic curve as a
\emph{(topological) circle}.

Other authors have considered related numerical methods before,
in particular~\cite{seara:villanueva:06, luque:villanueva:14}, which we will
compare to when we introduce our method. See also~\cite{Simo1, Simo2,
Durand:2002ug,Baesens:1990vj,Broer:1993uv,Vitolo:2011it,Hanssmann:2012jy,
Sevryuk:2012dv,Broer:1990ip,Kuznetsov:2015dl}.

Our paper proceeds as follows: In Section~\ref{sec:Quasi}, we give the formal
definition of quasiperiodicity, rotation, and the conjugacy map. 
In Section~\ref{sec:Method_Rot}, we describe our
numerical method in detail. We illustrate our method for 
a series of four examples, including an example of a two-dimensionally quasiperiodic map. 
In all cases, we get fast convergence and are in most cases able to 
give results with thirty digits of precision. For convenience of the reader, 
have summarized our numerical findings in Table~\ref{table:summary}. 

We start by describing our results for a key example of
quasiperiodicity: the (circular) restricted three-body problem. This is an
idealized model of the motion of the planet, a large moon, and a spacecraft
governed by Newtonian mechanics, in a model studied by
Poincar{\'e}~\cite{ThreeBody2,ThreeBody1}. In particular, we consider a
planar three-body problem consisting of two massive bodies (``planet" and
``moon") moving in circles about their center of mass and a third body
(``spacecraft") whose mass is infinitesimal, having no effect on the
dynamics of the other two.

We assume that the moon has mass $\mu$ and the planet mass is $1-\mu$ where $\mu = 0.1$, and
writing equations in rotating coordinates around the center of mass. Thus the planet remains fixed at
$(-0.1,0)$, and the moon is fixed at $(0.9,0)$. In these coordinates, the satellite's 
location and velocity are given by the {\em generalized position vector} $(q_1,q_2)$ 
and {\em generalized velocity vector}
$(p_1,p_2)$. 
The equations of motion are as follows (see \cite{ThreeBody1}). 
\begin{equation}
{\displaystyle
\begin{array}{rcl}\label{eqn:ThreeBody}
dq_1/dt &=& p_1+q_2, \\
dq_2/dt &=& p_2-q_1, \\
dp_1/dt &=& p_2- \mu(q_1-1+\rho) d_{moon}^{-3} -(1-\mu)(q_1+ \mu) d_{planet}^{-3},\\
dp_2/dt &=& -p_1-\mu q_2 d_{moon}^{-3}-(1-\mu)q_2 d_{planet}^{-3},
\end{array}
}
\end{equation}
where 
\[ d_{moon} = ((q_1-1+\mu)^2+q_2^2)^{0.5} \mbox{ and } d_{planet} = ((q_1+\mu)^2+q_2^2)^{0.5}.
\]
The following function $H$ is a Hamiltonian for this system
\begin{equation}\label{eqn:Hamiltonian}
H=[(p_1^2+p_2^2)/2]+[q_2p_1-q_1p_2]-[\mu~d_{planet}^{-1} + (1-\mu)~d_{moon}^{-1}].
\end{equation}
The terms in the square brackets are resp. the kinetic energy, angular moment and the angular potential.
For fixed $H$, Poincar{\'e} reduced this problem to the study of the Poincar{\'e} return map for a fixed value of $H$, only considering 
a discrete trajectory of the values of $(q_1,p_1)$ on the section $q_2=0$ and $\frac{dq_2}{dt}>0$. 
Thus we consider a map in two dimensions rather than a flow in four dimensions. 
Figure \ref{fig:3B3} shows one possible motion of the spacecraft for the full flow. The 
orbit is spiraling on a torus. The black circle shows the corresponding trajectory on the 
Poincar{\'e} return map. Fig.~\ref{fig:3B1} shows the Poincar{\'e} return map 
for the spacecraft for a variety of starting points. 
A variety of orbits are shown, most of which are quasiperiodic invariant circles. 
An exception is A-trajectory in Fig. \ref{fig:3B_global}(a), which is an invariant recurrent set consisting of $42$
circles. Each circle is an invariant quasiperiodic circle under the 42-nd iterate of the map. 
Using our numerical method for 
Birkhoff averages, in the itemized list given below we summarize our results for the quasiperiodic orbit labeled $B_1$. 

The error in the quasiperiodicity computations using weighted Birkhoff averages decreases exponentially 
in the number of iterates $N$ (see Fig.~\ref{fig:3B2}c). This speed of convergence 
means the accuracy of our solutions is the limit of numeric precision. 
In particular, we have computed trajectories for the Poincar{\'e} return map using an 8-th order Runge-Kutta 
method with time step $10^{-5}$, in quadruple precision, meaning our results given 
below are computed up to thirty digits of accuracy. 
Section~\ref{sec:Quasi} formally defines the computed values given in this list. 
\begin{enumerate}
\item The rotation number is given in Table~\ref{table:summary}, computed to 30 digits of accuracy. 
\item We can compute the Fourier series of up to 200 terms. There is a conjugacy map $h$ between the 
first return map and a rigid rotation on the circle. Evaluating the Fourier series allows us to reconstruct the conjugacy map ({\em cf.} Fig.~\ref{fig:3B2}a). 
\item The exponential decay of the coefficients in the Fourier series is a strong indication
of the analyticity of conjugacy function ({\em cf.} Fig.~\ref{fig:3B2}b). 
\end{enumerate}

\begin{figure}[t]
\centering
\subfigure[ ]{\includegraphics[width = .33\textwidth]{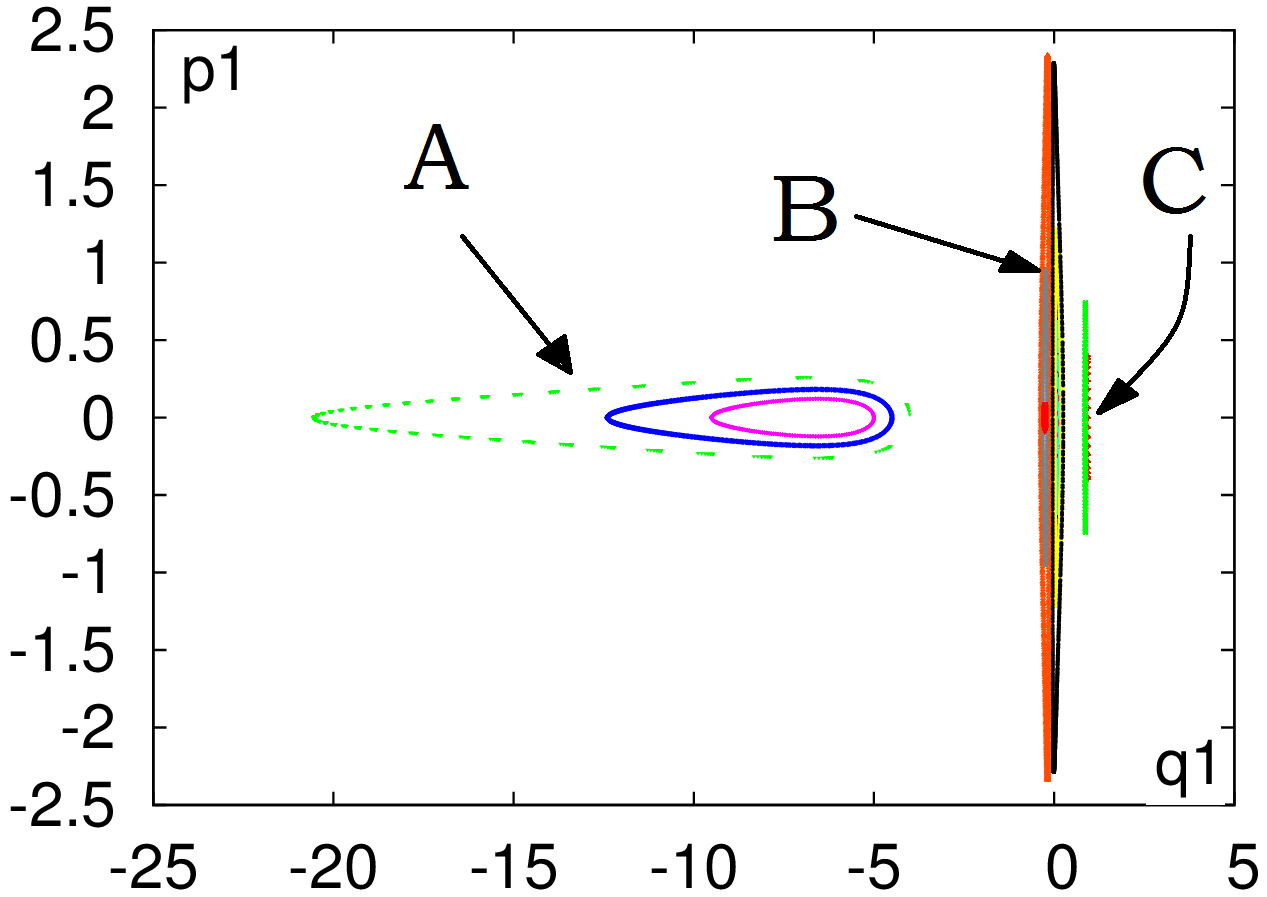}}
\subfigure[ ]{\includegraphics[width = .33\textwidth]{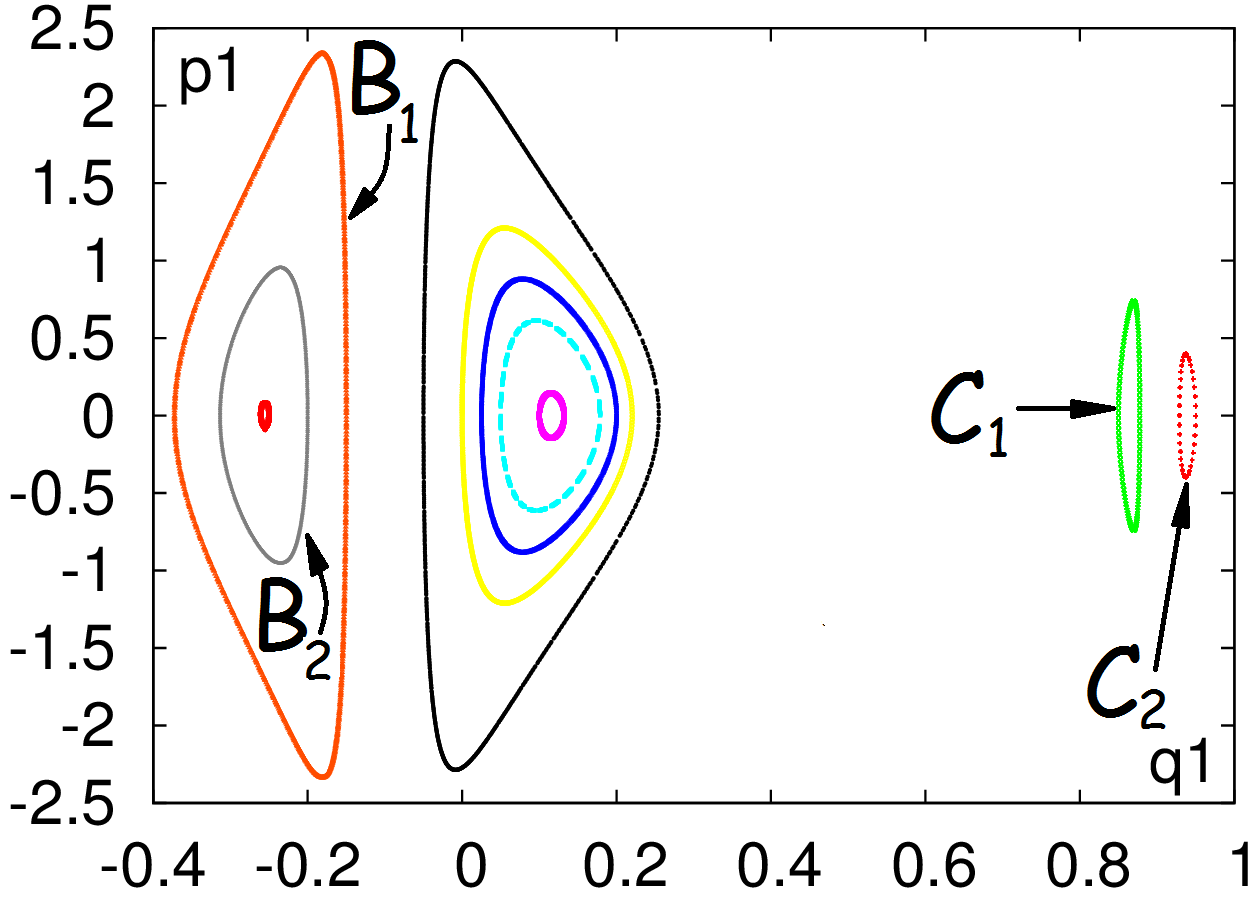}}
\subfigure[ ]{\includegraphics[width = .33\textwidth]{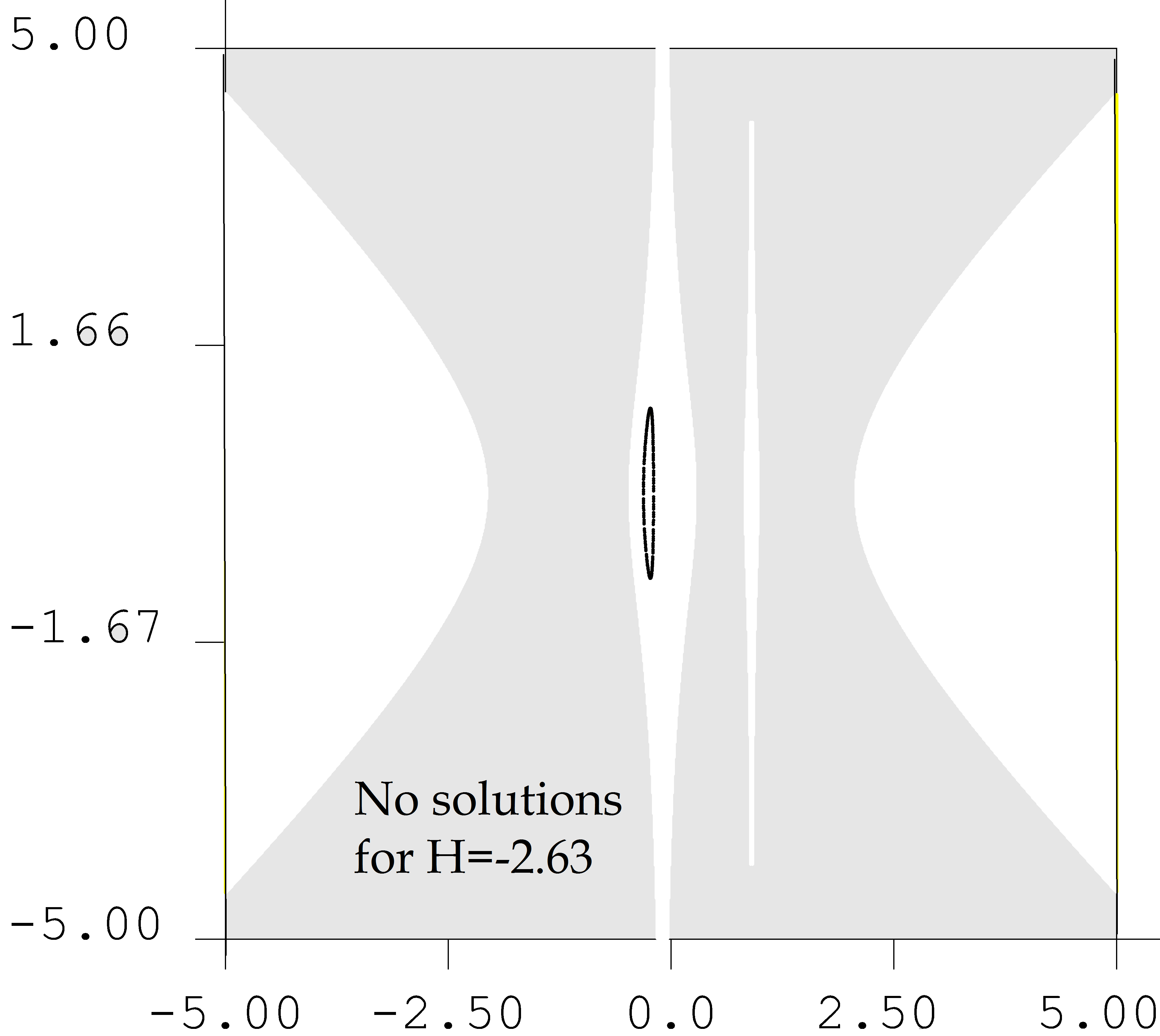}}
\caption{ {\bf Poincar{\'e}-return map for the restricted three-body problem.} \label{fig:3B1}
All three parts show a projection to the $q_1-p_1$ plane. Figures (a) and (b) show various quasiperiodic trajectories 
on the Poincar{\'e} section $q_2=0$, of the restricted three-body problem (\ref{eqn:ThreeBody}). Note that
the planet is fixed at the point $(-0.1,0)$ and the planet at $(0.9,0)$. Thus some trajectories orbit
both the planet-moon system and some only orbit the planet or the moon. Each time the flow hits 
$q_2=0$ and $\frac{dq_2}{dt}>0$, we plot $(q_1,p_1)$. Each trajectory shown is a (topological) circle
with quasiperiodic motion. The energy $H$ for all the circles shown in the figures, including $B_1$ is the same and $H\approx -2.63$. Part (c) shows in white all the initial points $(q_1,p_1)$ on the Poincar{\'e}  surface for which there exists a 
$p_2$ so that the Hamiltonian $H$ at $(q_1,q_2=0,p_1,p_2)$ is the same as the one in parts (a) and (b). Part (c) also shows the trajectory which corresponds to the circle $B_1$ in (a) and (b).}
\label{fig:3B_global}
\end{figure}
\begin{figure}
\centering
\subfigure[ ]{\includegraphics[width = .48\textwidth]{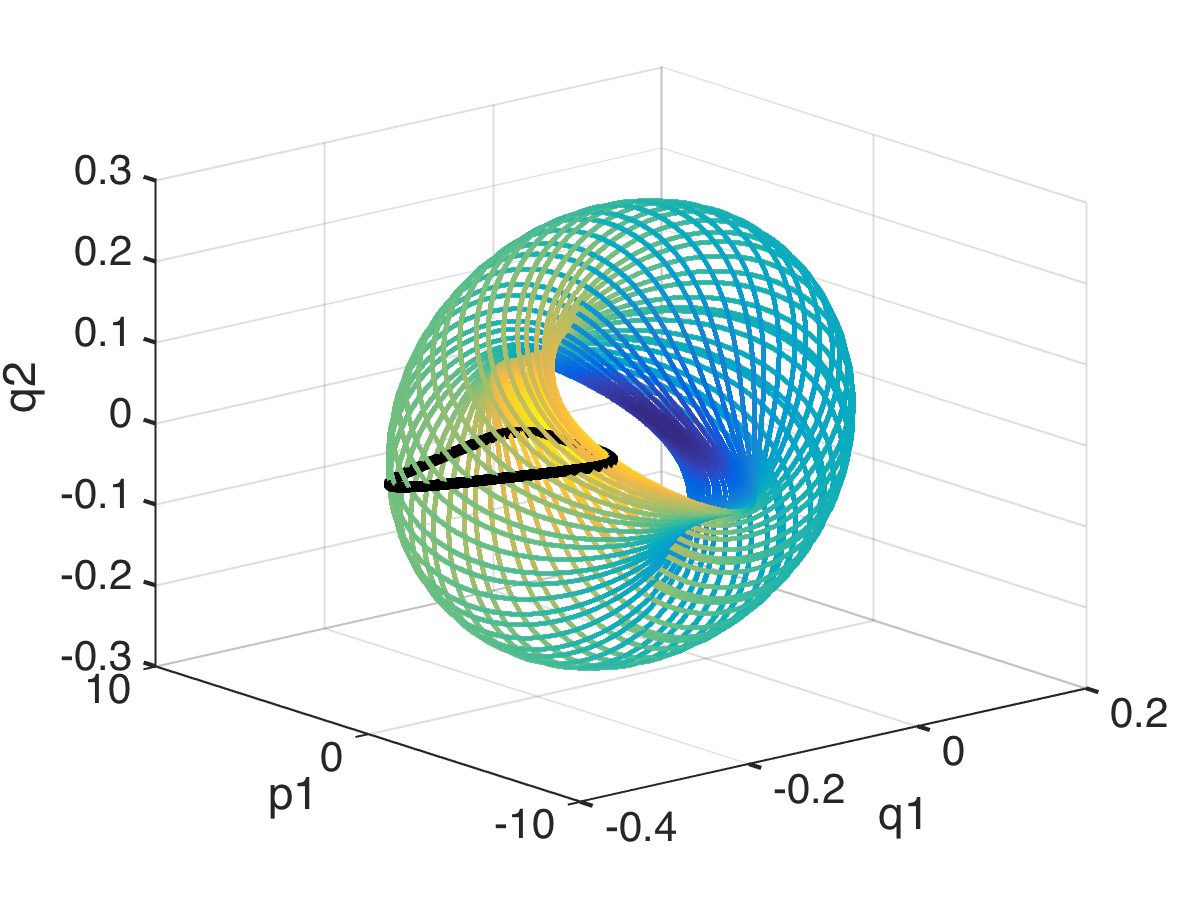}}
\subfigure[ ]{\includegraphics[width = .48\textwidth]{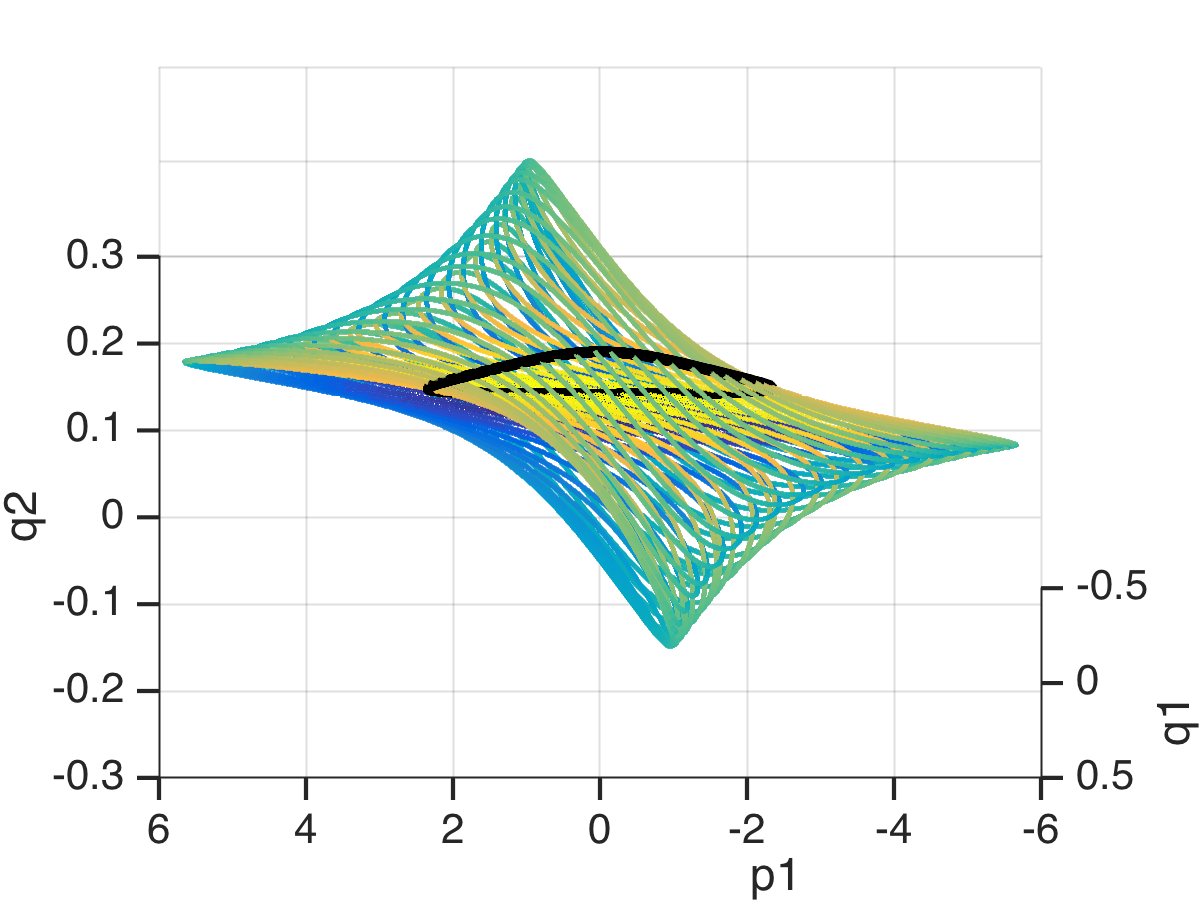}}

\subfigure[ ]{\includegraphics[width = .48\textwidth]{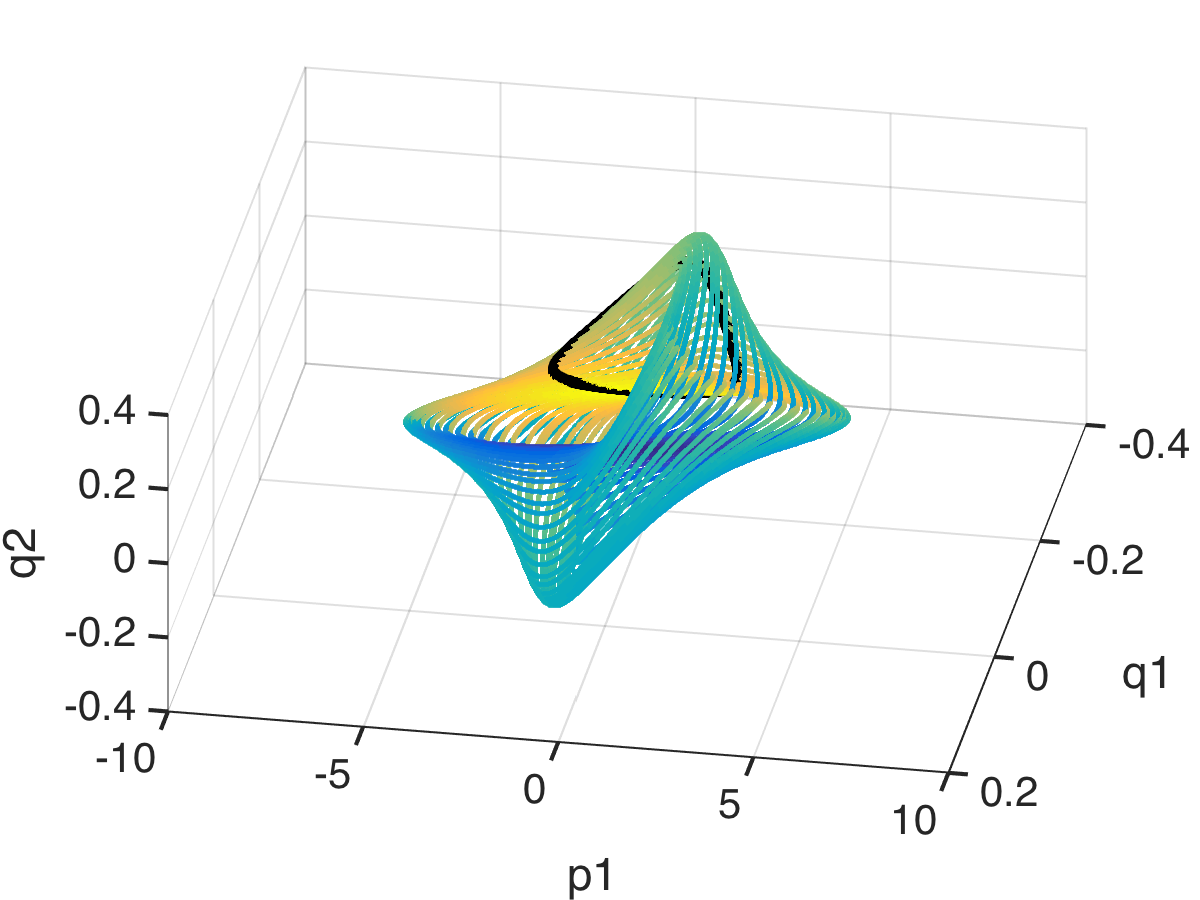}}
\subfigure[ ]{\includegraphics[width = .48\textwidth]{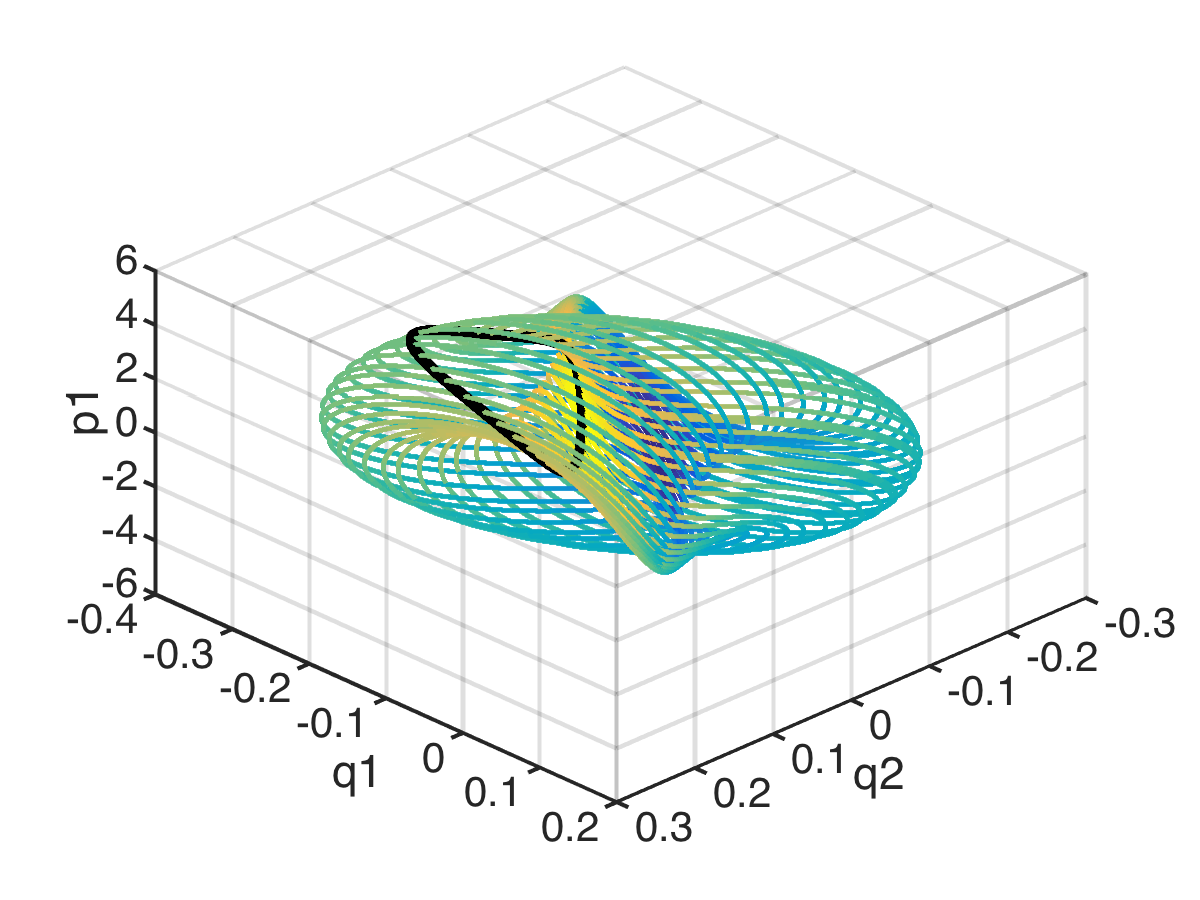}}
\caption{\label{fig:3B3}
{\bf Torus flow for the restricted three-body problem.} 
All four views are of the same two-dimensional quasiperiodic torus lying in $\mathbb{R}^4$. Each picture consists of the same trajectory spiralling densely on this torus. This trajectory is the solution of (\ref{eqn:ThreeBody}), shown as curve $B_1$ in Fig. \ref{fig:3B_global}. We require four different 
views of this torus because the embedding into three dimensions gives a highly non-intuitive images. The black circle is the set of values of the Poincar{\'e} return map with $q_2=0$ for this flow torus.}
\end{figure}
\begin{figure}[t]
\centering
\subfigure[ ]{\includegraphics[width = .4\textwidth]{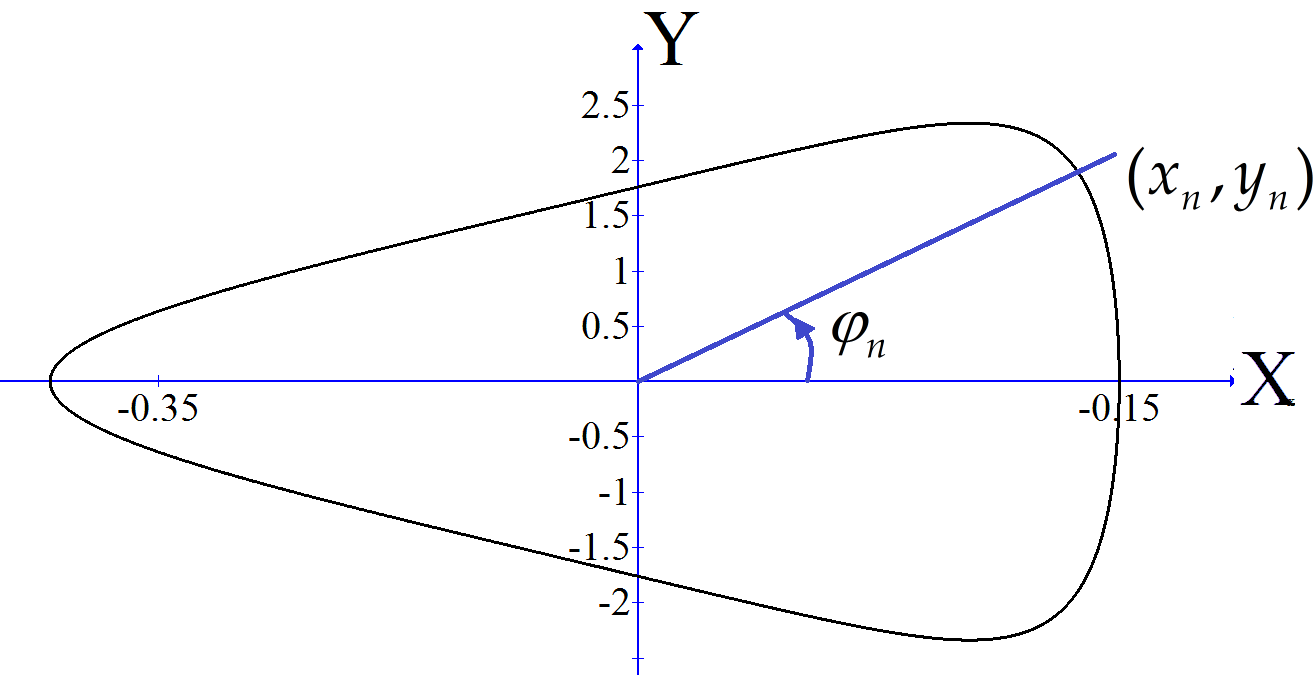}}
\subfigure[ ]{\includegraphics[height=.2\textwidth]{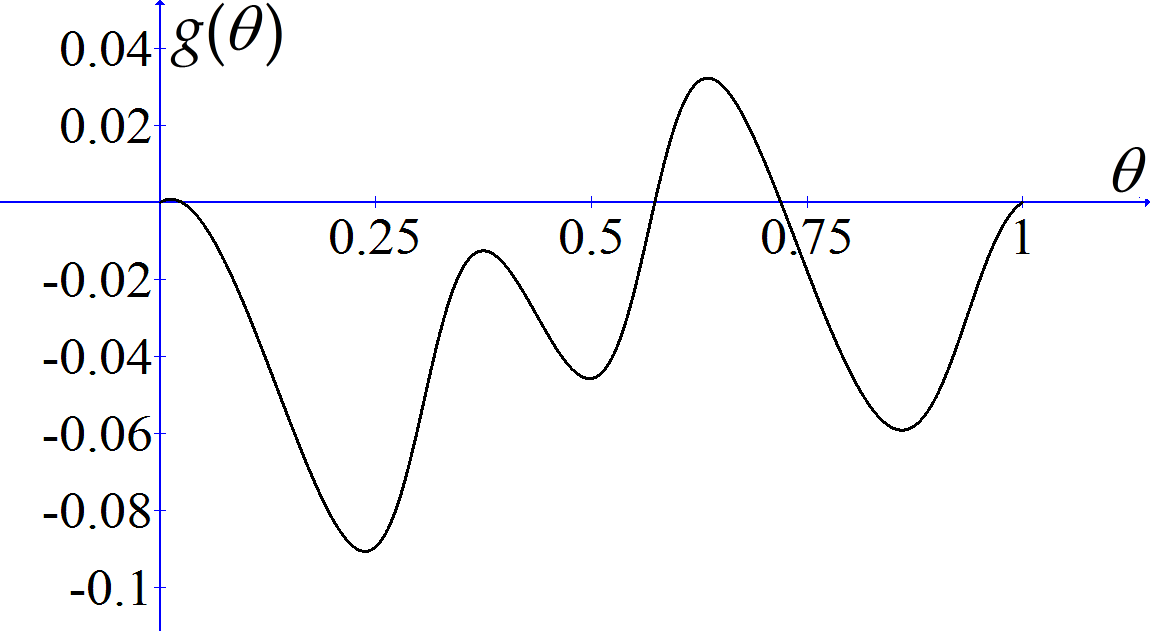}}

\subfigure[ ]{\includegraphics[height= .4\textwidth]{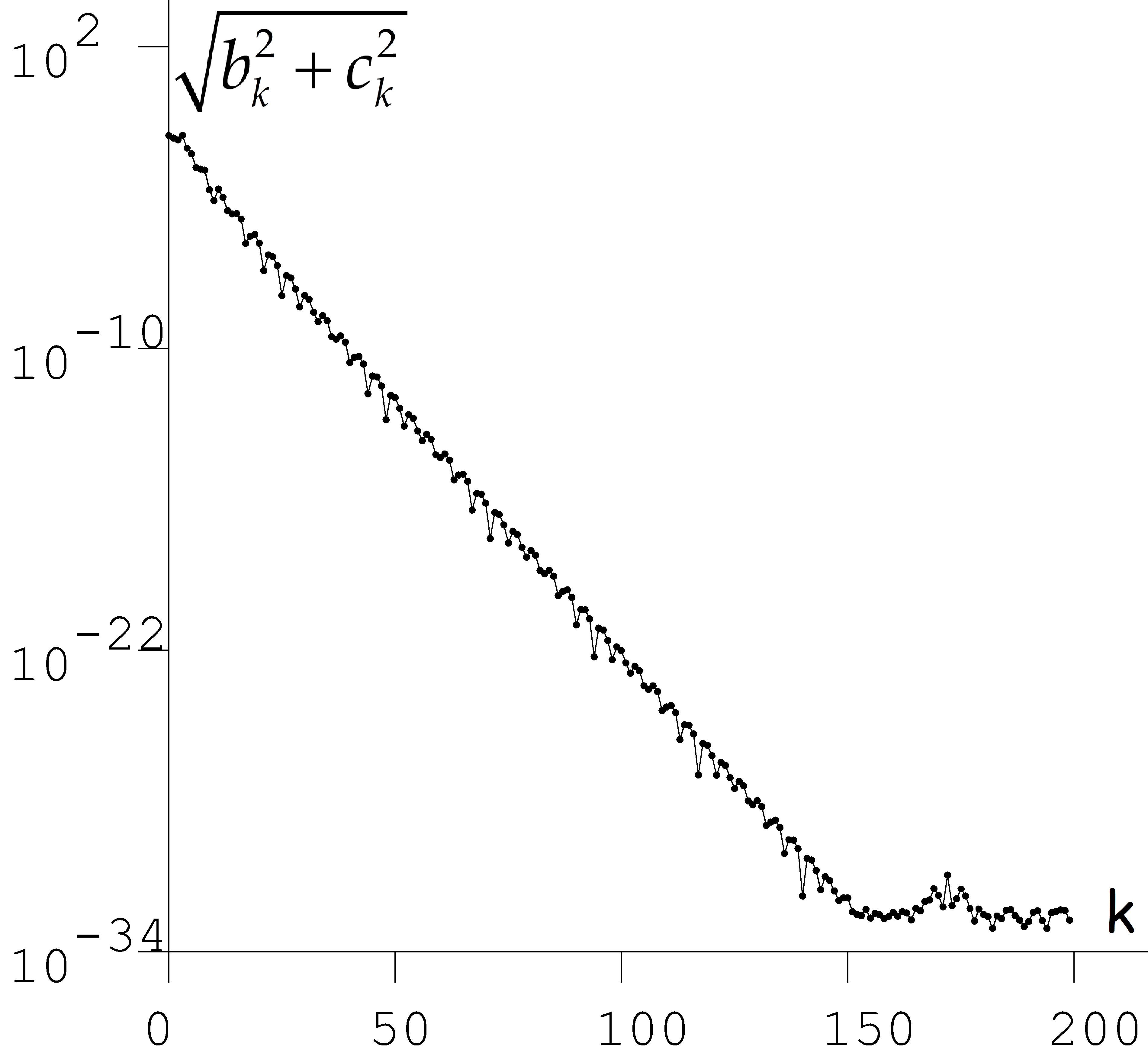}}
\subfigure[ ]{\includegraphics[height= .4\textwidth]{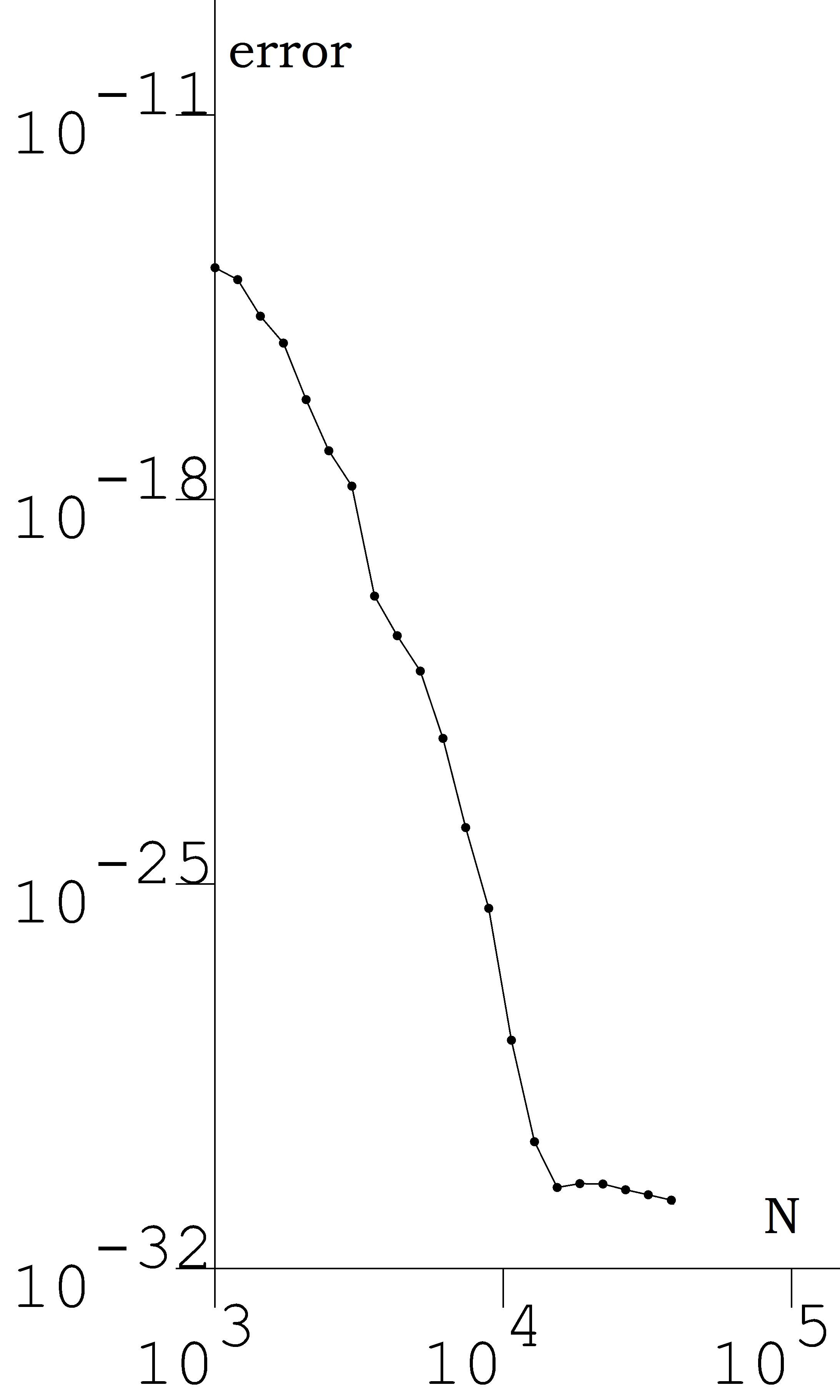}}
\caption{\textbf{Quasiperiodicity for the restricted three-body problem.} For the quasiperiodic circle $B_1$ in Fig. \ref{fig:3B1}, Part(a) shows how the invariant circle is parameterized by coordinates $\phi \in S^1 \equiv [0,1)$. Part (b) depicts the periodic part $g(\theta)$ of the conjugacy between the 
quasiperiodic behavior and rigid rotation by $\rho$. See (\ref{eqn:g_conjufacy}) for a description of $g(\theta)$. Part (c) shows the norm of the Fourier coefficients of the conjugacy as a function of index. This exponential decay 
indicates that the conjugacy function is analytic, up to numerical precision. Part (d) shows the convergence
rate of the error in the rotation number $\rho_N$ as a function of the number of iterates $N$. The ``error'' is the difference $|\rho_N-\rho_{N^*}|$, where $N^*$ is large enough so that $\rho_N$ appears to have converged.
The step size used for the Runge-Kutta (RK8) scheme is $10^{-5}$.
}
\label{fig:results_3B}
\label{fig:3B2}
\end{figure}
\begin{table}[t]
\begin{center}
{\begin{tabular}{ |c | c| c | c | }
\hline
Example & Equation & Rotation number(s) & Related Figures \\
\hline
Restricted three-body problem & \ref{eqn:ThreeBody} & $0.063961728757453097164077919081024$ & \ref{fig:3B1}, \ref{fig:3B2}, \ref{fig:3B3} \\[1pt]
Standard map & \ref{eqn:StdMap} & $0.12055272197375513300298164369839$ & \ref{fig:StdMap_global}, \ref{fig:StdMap} \\[1pt]
Forced van der Pol oscillator, $F=5$ & \ref{vdPol_forced} & $0.29206126329199589285577578718959$ & \ref{fig:vdP_global} \\[1pt]
Forced van der Pol oscillator, $F=15$ & \ref{vdPol_forced} & $0.37553441113144010884908928083318$ & \ref{fig:vdP_global} \\[1pt]
Forced van der Pol oscillator, $F=25$ & \ref{vdPol_forced} & $0.56235370092685056634419221336154$ & \ref{fig:vdP_global} \\[1pt] 
Two-dimensional torus & \ref{eqn:2D_map} & $\rho_1, \rho_2$ in Table~\ref{table} & \ref{fig:2DMap_overview}, \ref{2DTorus_results} \\[1pt]
\hline
\end{tabular}}
\end{center}
\caption{\label{table:summary}
{\bf Summary of our numerical calculations.} 
}
\end{table}

\clearpage
\section{Quasiperiodicity}\label{sec:Quasi}

In the introduction, we described quasiperiodic motion as motion that
could be fully understood through a set of angles of rotation. We now
formalize that idea in the following definition for
quasiperiodicity.

\begin{definition}[Quasiperiodicity]
For a dimension $d \ge 1$, let $\rho = (\rho_1,\rho_2,\dots, \rho_d)$ be a rotation vector
such that all $\rho_k$ are irrational and irrationally related (defined below in (\ref{eqn:irratrel})).
Then following map is known as a {\bf rigid irrational rotation}:
\begin{equation}\label{eqn:Pure_rotation}
T_{\vec\rho}(\theta) = \theta + \rho \pmod 1, \mbox{ where } 0 \le \theta_k \le 1, k=1,2,\dots,n.
\end{equation}
A rigid rotation the simplest, albeit least interesting example of a map
with quasiperiodicity. Since $T_{\vec\rho}$ gives the same values on opposite sides
of the unit cube, we identify the sides and refer to the domain of the
rigid rotation as a {\bf circle} in one dimension and an {\bf $n$-torus}
in $n>1$ dimension. From now on we will refer to the circle as
a 1-torus. We define a general map $T$ to be {\bf
quasiperiodic} if either $T$ or some iterate $T^k$ is topologically
conjugate to a rigid rotation. (We will assume $k=1$ in the rest of this
description.) That is, a map $T$ is quasiperiodic if there is a rigid
rotation map $T_{\vec\rho}$ and an invertible {\bf conjugacy map} $h$ such that
\[T(h(\theta)) = h(T_{\vec\rho}(\theta)).\] A flow has quasiperiodic behavior if
its associated first return Poincar{\'e} map has quasiperiodic behavior.

Since we are focused on maps, we restate this definition in
terms of iterates. Let $(x_n)$ be the forward orbit under $T$, and $( \theta_n )$ the
forward orbit under $T_{\vec\rho}$. That is, $x_{n+1} = T(x_n)$ and $\theta_{n+1} = \theta_n + \rho \pmod 1$. Then as long as $\phi_0 = h(\theta_0)=h(0)$,
\begin{equation}\label{eqn:conjugacy1D}
\phi_n = h(\theta_n)~\mbox{ for all } n=0,1,2,3,\ldots .
\end{equation}
The map $h$ maps a torus to the domain of $T$, meaning that the domain of $T$ is
a topological circle or torus. Since $h$ is a homeomorphism, the map
\begin{equation}\label{eqn:g_conjufacy}
g(\theta) := h(\theta)-\theta
\end{equation}
is periodic.
\end{definition}

Thus a map $T$ restricted to an invariant topological circle $C$ is
quasiperiodic if there is an continuous invertible map $h$ from $C$ to the true circle
which maps $T$ to a rigid irrational rotation map. 

For an invertible map $F$ to be quasiperiodic on a topological circle
$C$, it is necessary and sufficient that $F$ has a dense trajectory, as
shown in \cite{TransCircleHomeo}. In general, a circle homeomorphism
without periodic points may not be quasiperiodic. However, if we assume
that the map $F$ and the topological circle $C$ are twice continuously
differentiable, then Denjoy~\cite{VanKampen} showed that these
conditions are both necessary and sufficient. Furthermore, clearly any
rigid irrational rotation map is a real analytic map, but even if we
assume that a quasiperiodic function is analytic, Arnold showed that the conjugacy map $h$
may only be continuous for some atypical rotation number. However, Herman (see \cite{HermanSeminal}) proved that for circle
homeomorphisms, most conjugacy maps $h$ are analytic.

\clearpage
\section{Our $\Q_N+$ method and its applications}
\subsection{Rotation number}\label{sec:Method_Rot}

Here is our method for calculating the limit of the Birkhoff average $\lim_{N \to
\infty} \Sigma_{n=1}^N f(x_n)/N = \int f \; d\mu$ 
along an ergodic trajectory (or first return map) $(x_n)$. 
Let the weighting function $w$ be defined as 
\begin{equation}\label{eqn:weight}
w_{exp}(t) :=w(t) :=\begin{cases}
\exp\left(\frac{1}{t(t-1)}\right), & \mbox{for } t\in(0,1)\\
0, & \mbox{for } t\notin(0,1).
\end{cases}
\end{equation} 
Let $T$ be quasiperiodic on some set $X_0$, with $X_0$ a topological torus. Henceforth, $X_0\equiv\torus$ will denote a $d$-dimensional topological torus. For $d=1$, $\torus$ is a topological circle. For a continuous function $f$ and a $C^\infty$ quasiperiodic map $T$ 
on $\torus$, let $x_n\in\torus$ be such that $x_n = T(x_{n-1})$ for all $n>1$. We define a {\bf Weighted Birkhoff ($\Q_N$) average} 
of $f$ as 
\begin{equation}\label{eqn:QN}
\Q_N(f)(x) :=\sum_{n=0}^{N-1} \hat{w}_{n,N}f(x_n),\mbox{ where }\hat{w}_{n,N}=\frac{w(n/N)}{\Sigma_{j=0}^{N-1}w(j/N)}
\end{equation}

In a companion paper~\cite{Das-Yorke}, it is shown that if $f$ and $T$ are $C^\infty$, 
then for every positive integer $m$ there is a constant $C_m >0$ such that 
\[\left| WB_N(f)(x) - \int f d\mu \right| \le C_m N^{-m} \mbox{ for every }m.
\]
We refer to the above as {\bf super} (polynomial) {\bf convergence} or {\bf exponential convergence}.

Our numerical method converges fast enough to allow us to obtain high precision values for $\int f d\mu$ 
with relatively low computational cost. In particular, by appropriate choices for $f$, 
we obtain rotation numbers and sometimes the Lyapunov exponents,
Fourier coefficients, and conjugacy reconstructions for quasiperiodic
maps and flows potentially in any finite dimension, though here we work mainly in dimension 
one, with one example in dimension two. We now show
how to apply this general method to computation of specific quantities. We observe that $N$ must be larger for $\mathbb{T}^2$ than for $\mathbb{T}^1$, to get a 30-digit accuracy.

{\bf Calculation of rotation vectors.}
\begin{figure}[t]
\centering
\subfigure[ ]{\includegraphics[width = .35\textwidth]{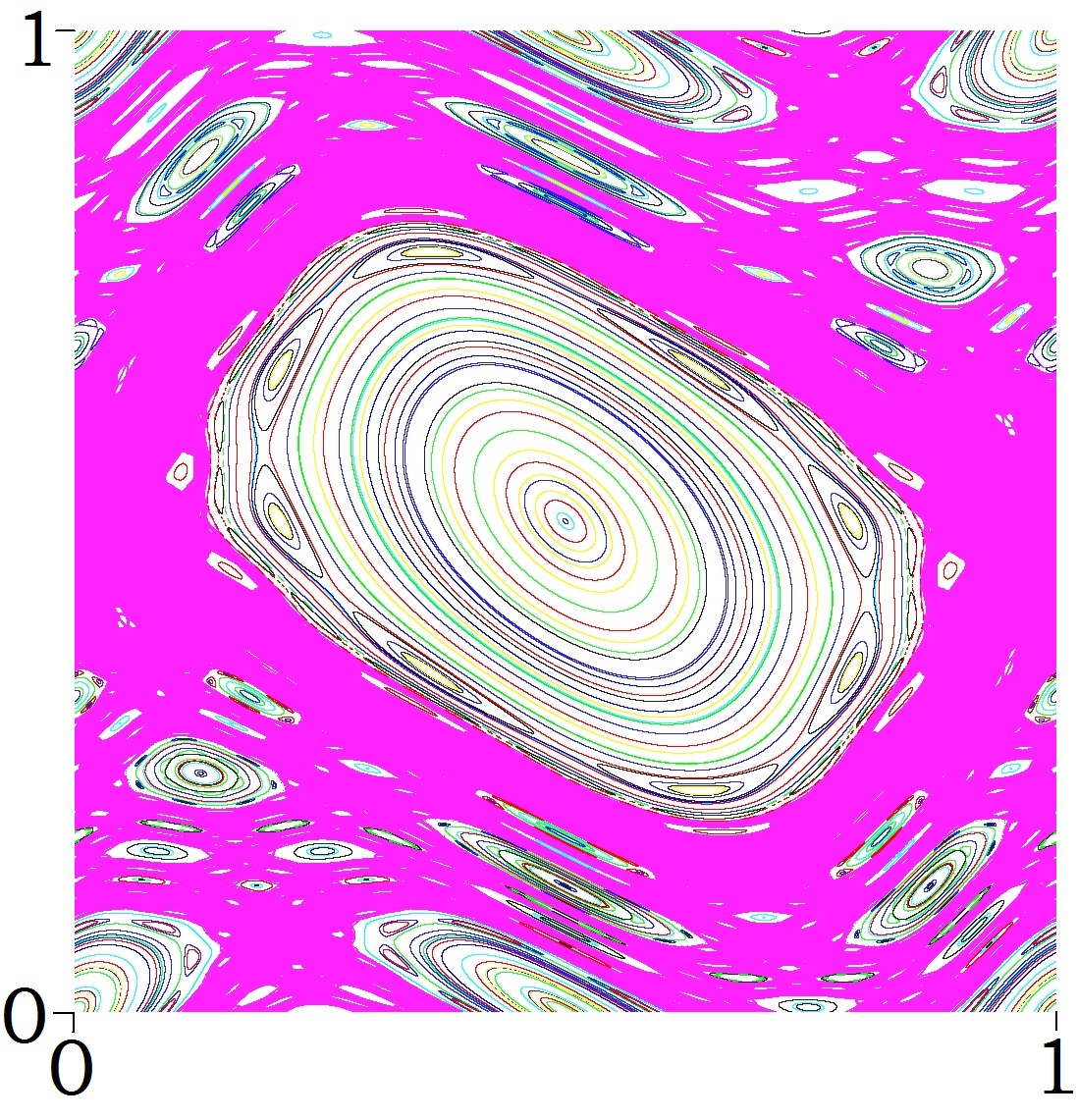}}
\subfigure[ ]{\includegraphics[width = .35\textwidth]{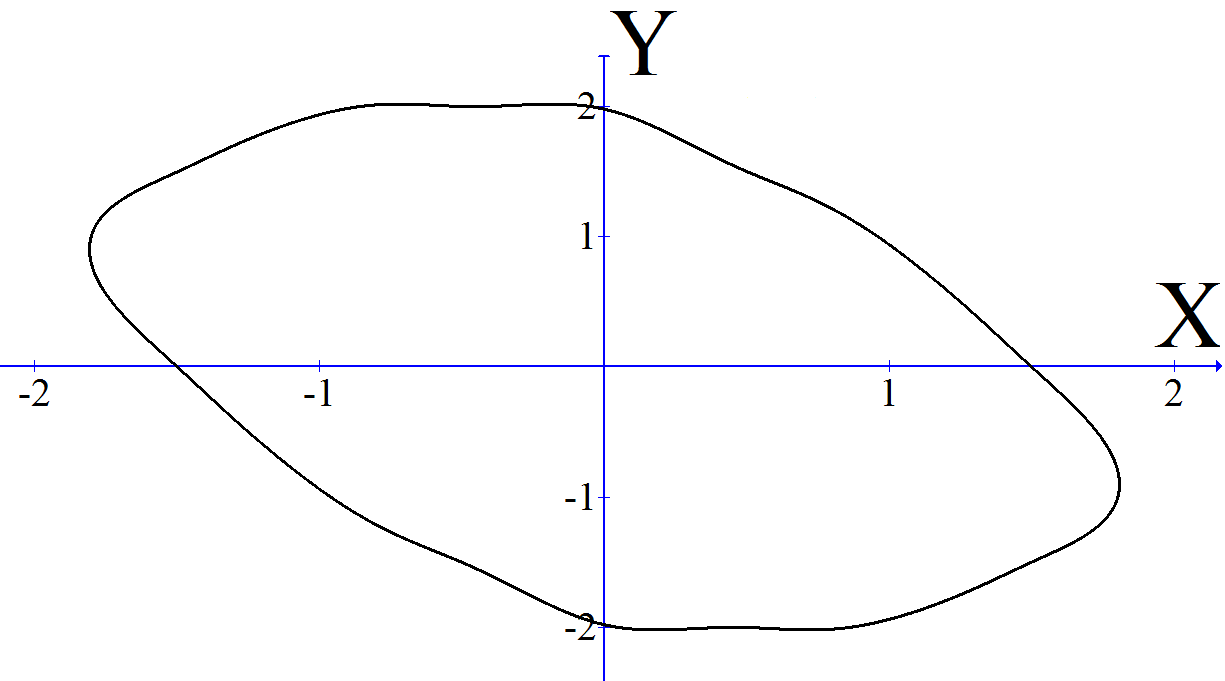}}
\caption{
{\bf The standard map.} A variety of orbits from different initial
conditions in the standard map $S_1$ defined in~(\ref{eqn:StdMap}) are plotted on the
left. We can see both chaos (in pink) and quasiperiodic orbits under this map. A single
topological circle with quasiperiodic behavior is plotted on the right.
The orbit has initial conditions 
$(x, y) \approx (-0.607, 2.01)$.
That is, if we restrict the map to this invariant circle, then it is
topologically conjugate to a rigid irrational rotation.
}
\label{fig:StdMap_global}
\end{figure}
\begin{figure}[t]
\centering
\subfigure[ ]{\includegraphics[width = .35\textwidth]{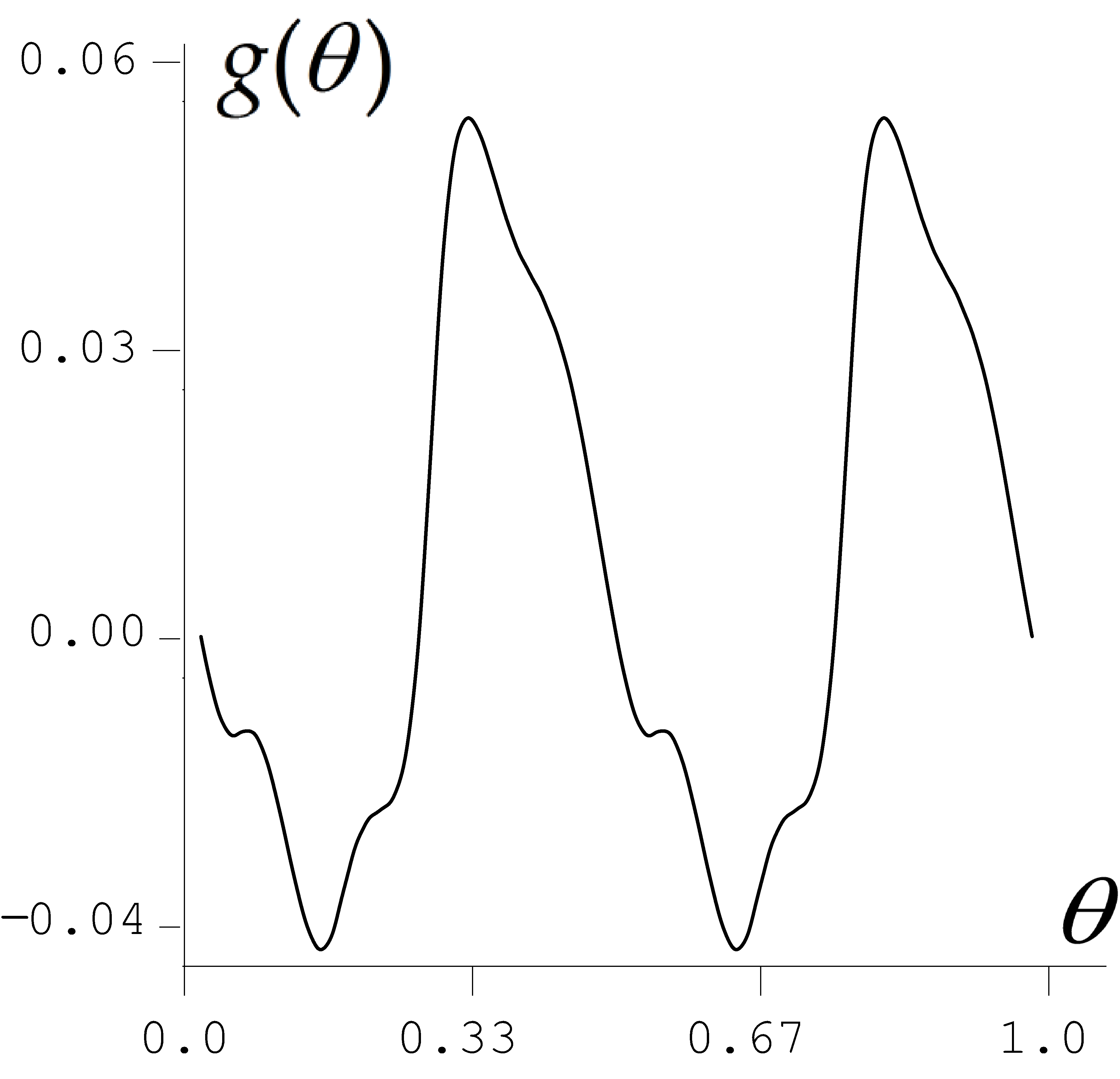}}
\subfigure[ ]{\includegraphics[width = .35\textwidth]{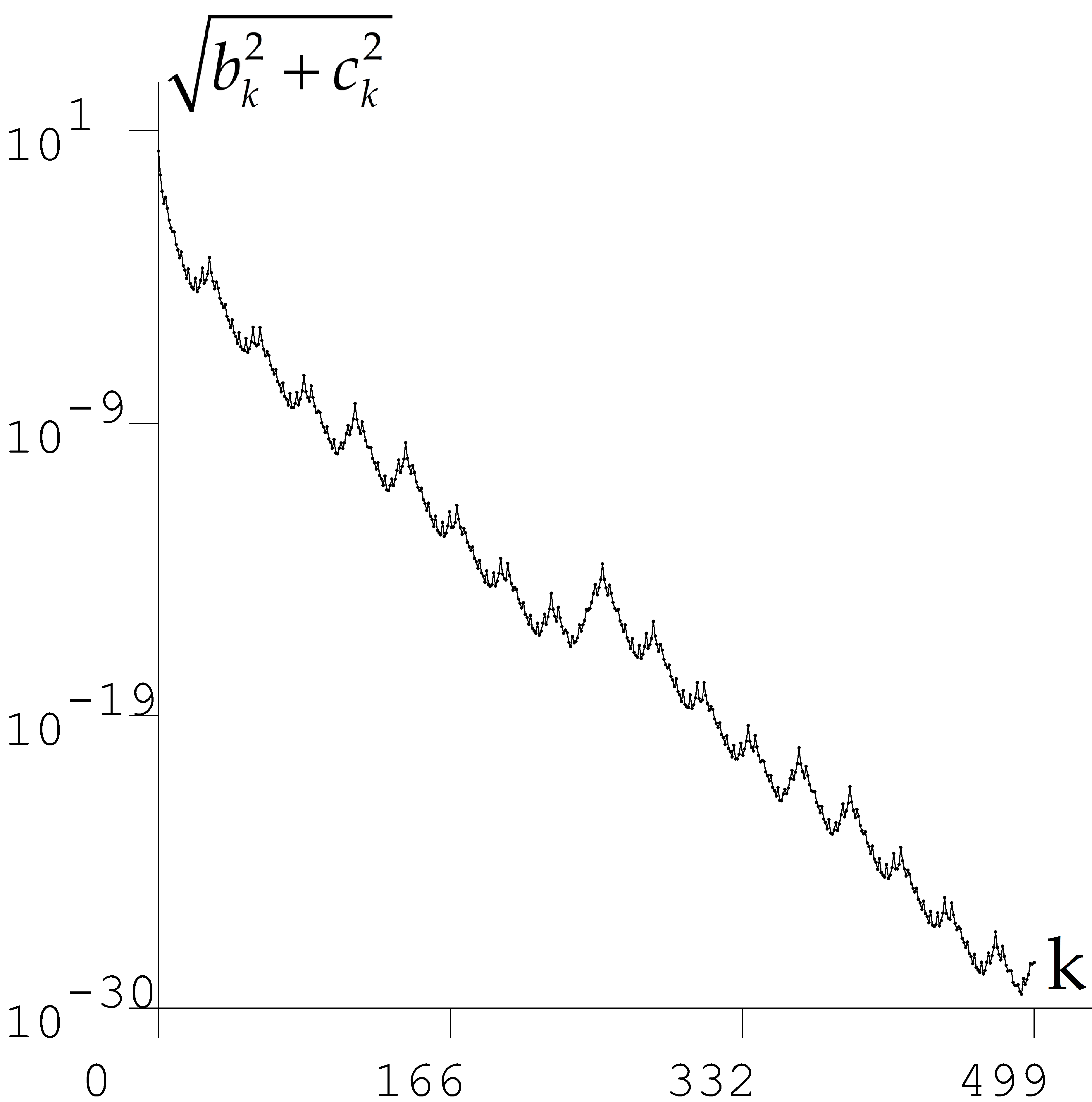}}
\caption{\textbf{The Standard map conjugacy.} 
Fig. (a) shows the shows the
change of variables which converts the motion on the circle in Fig.~\ref{fig:StdMap_global}b into a pure
rotation. 
Fig. (b) shows the decay of the Fourier coefficients. Since
the conjugacy is an odd function, the odd-numbered Fourier sine and
cosine terms are zero and therefore have been omitted from the picture.
The decay of the Fourier terms can be bounded from above be an
exponential decay, which suggests that the conjugacy is analytic.}
\label{fig:StdMap}
\end{figure}
\begin{figure}[t]
\centering
\subfigure[ ]{\includegraphics[width=.3\textwidth]{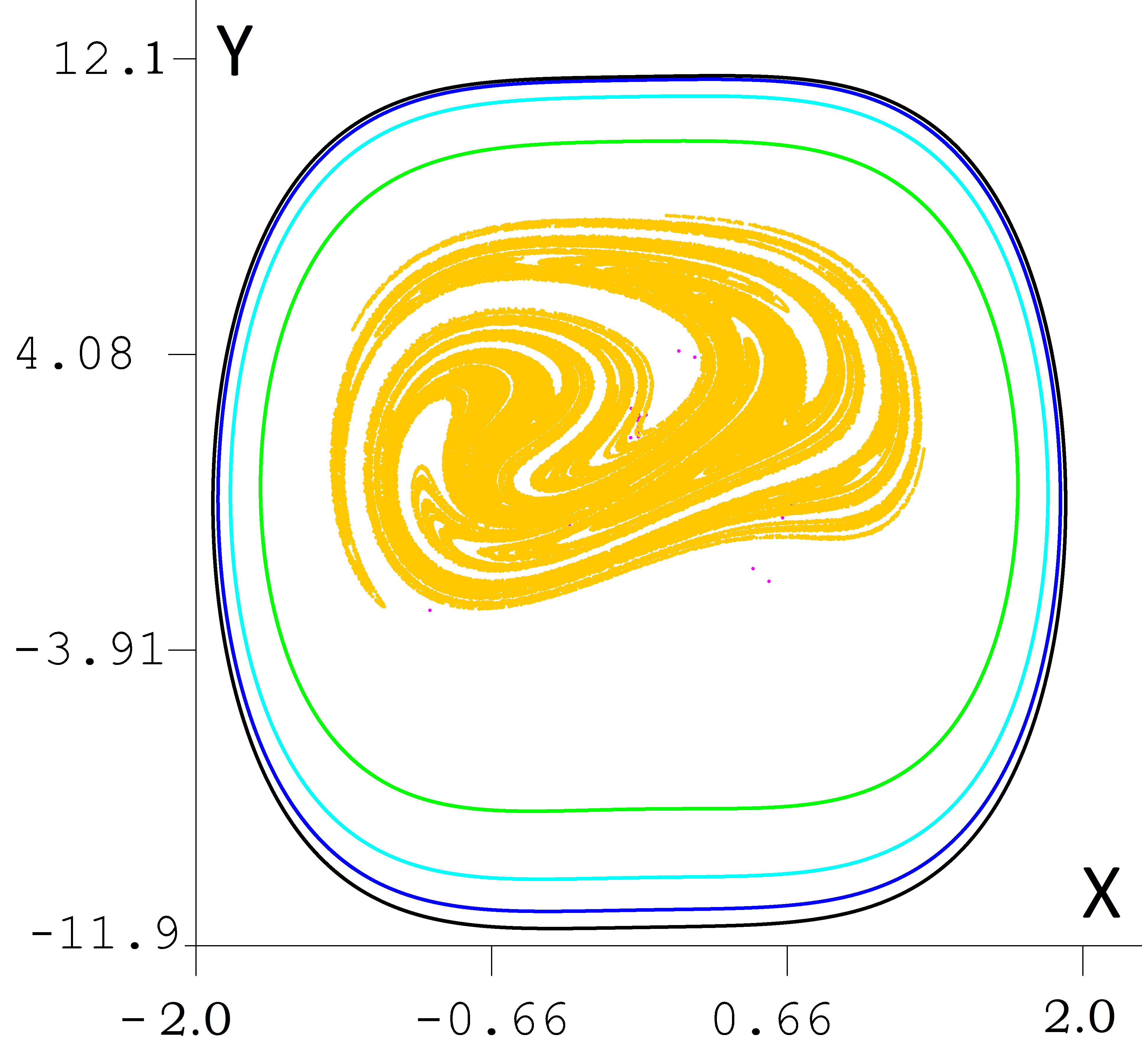}}
\subfigure[ ]{\includegraphics[width = .3\textwidth]{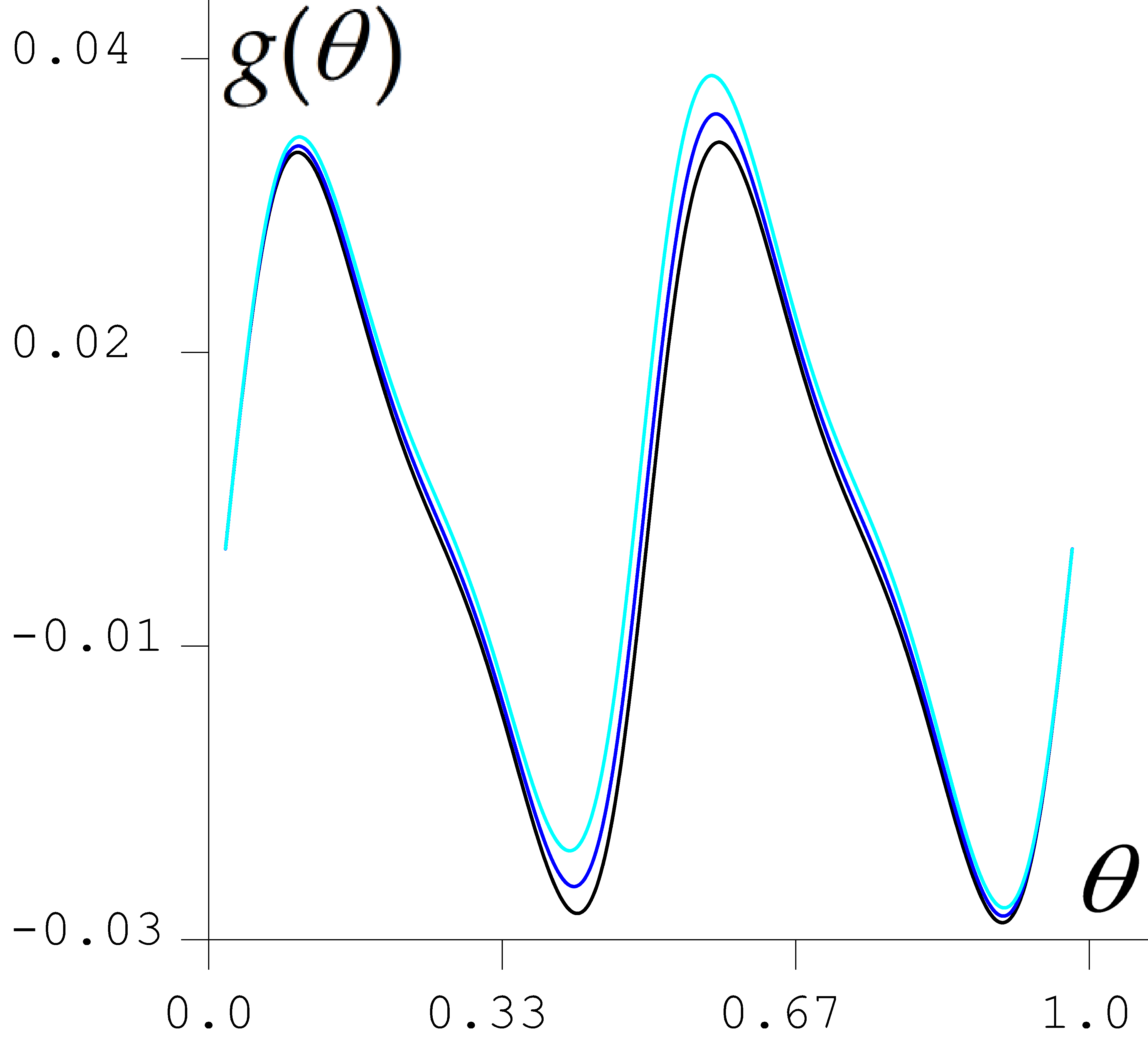}}
\subfigure[ ]{\includegraphics[width = .3\textwidth]{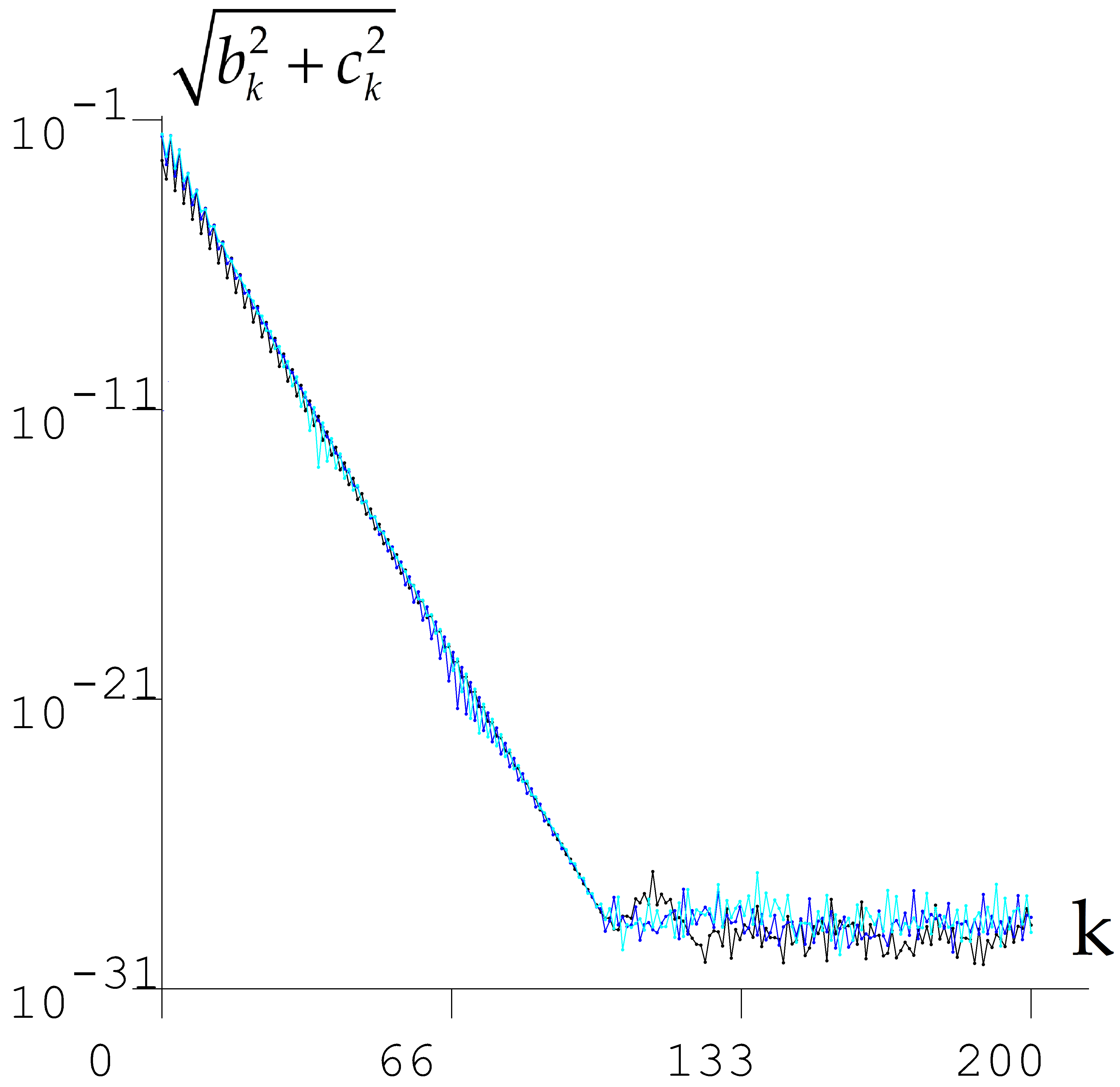}}
\caption{ {\bf Forced van der Pol oscillator.} \label{fig:vdP_global}
Part (a): Attracting orbits for a number of different forcing values $F$ for the stroboscopic
map of the van der Pol flow given in (\ref{vdPol_forced}). The plot
depicts points
$(X,Y) = (x(t_k),x'(t_k))$, where $t_k = 2 k \pi/0.83, k = 0,1,2,\dots.$
The chaotic orbit lying inside the cycles corresponds to $F=45.0$. There are stable quasiperiodic orbits shown as circles, which from outermost to innermost correspond to $F=5.0$, $15.0$, $25.0$ and $35.0$ respectively. Part (b): the periodic part $g(\theta)$ of
the conjugacy (Eq. \ref{eqn:conjugacy1D}) to a pure rotation, for $F=$ 5, 15 and 25. Part (c): the second figure gives a demonstration of the analyticity of the conjugacy to a pure rotation by studying the decay of the norm of the $k$-th
Fourier coefficients with $k$, up to the resolution of the numerics.
}
\end{figure}
Recall that every quasiperiodic map is conjugate to a pure rotation $T_\rho$ by a vector $\rho = (\rho_1, \dots,\rho_n)$. In our discussion, we assume that the components 
of $\rho$ are irrational and irrationally related. 
When we
say that $\rho_1,\dots,\rho_d$ are irrationally related, we mean that
there are no integers $k_1,\dots,k_d $
and $k_0$ for which,
\begin{equation}\label{eqn:irratrel}
k_1\rho_1 + k_2 \rho_2 + \dots + k_d \rho_d \ne k_0
\end{equation}
unless all $k_i$ are zero.
Since we are only provided with the
map $T$ and not with the map $R$, we need a way of recovering this
rotation vector using only a forward trajectory of the map $T$. The
following formula gives a way of calculating the rotation vector $\rho$
directly from the original map. The rotation vector given in the formula
is valid well beyond the case of a quasiperiodic function. For example, the rotation vector can
certainly be rational in one or all of its components. 
%
\begin{definition}[Rotation vectors.]
Let $q(x) = x \pmod 1$ and define $q:\mathbb{R}^d \to \torus$ to be the projection map modulo
$\mathbb{Z}$ in each coordinate. For example, in one dimension the circle $[0,1]$ with ends identified is the image of the real
$\mathbb{R}$ under the map $q$.
For any homeomorphism $T:\torus \to \torus$, let $\bar{T}:\mathbb{R}^d\rightarrow\mathbb{R}^d$ be the lift of
$T$. That is,
\begin{displaymath}\label{eqn:q_as_cover}
q\circ\bar{T}=\phi\circ q.
\end{displaymath}
Since $T$ is a homeomorphism, the map $\bar{T}(x)-x$ must be periodic in $x$ with period 1.
The rotation vector of a homeomorphism $T: \torus \to \torus$ starting at lift point $z$ is given by
\begin{equation}\label{eqn:rot_num_defn}
\rho(T,z):=\underset{n\rightarrow\infty}{\lim}\frac{\bar{T}^n(z)-z}{n}.
\end{equation}
\end{definition}
Note that possible rotation numbers $(\rho_1,\rho_2)$ of a quasiperiodic map on a torus $\torus$ depend on the coordinates chosen for $T$ and $z$. For example,
for domain $\mathbb{T}^2$, the set of pairs $(\rho_1,\rho_2)$ possible for a fixed $T$ is dense in 
the two-torus. See~\cite{Das-Yorke}. 

Based on the above definition, the standard method of computing $\rho$ from an orbit $(y_k)$
of length $n$ is to calculate the average
\begin{equation}\label{eqn:rot_num_alt}
\rho\approx\frac{1}{N} \sum_{n=1}^{N} (\bar{T}(y_n)-y_n).
\end{equation}
This method converges slowly at best, with order of only
$1/N$~\cite{seara:villanueva:06}. 
However, since Eq. \ref{eqn:rot_num_alt} can be written as a Birkhoff average by writing $f(y_n)=T(y_n)-y_{n}$, we can apply our method 
to this function. That is, let $(y_k)_{k=0}^n$ be a forward orbit for $\bar{T}$.
Our approximation of $\rho$ is given by the weighted average of $f$,
\begin{equation}\label{eqn:weighted_rho}
\WB_N (y_{n+1}-y_n) := \sum\limits_{n=0}^{N-1} \hat{w}_{n,N} (y_{n+1}-y_n)\rightarrow \rho.
\end{equation}

{\bf Rotation number for the restricted three-body problem.} In the
introduction, we stated our results, including the rotation number for a
quasiperiodic orbit for the restricted three-body problem.
Fig.~\ref{fig:3B2}(b) shows the convergence rate of the calculation for
this rotation number. .

{\bf Rotation number for the standard map.} 
The standard map is an area preserving map on the two-dimensional torus, often studied as a typical
example of analytic twist maps (see \cite{StdMap1}). It is defined as follows\footnote{The 
standard map generally depends on a parameter $\alpha$, but we only
consider the case $\alpha=1$.}
\begin{eqnarray}\label{eqn:StdMap}
S_1
\left(
\begin{array}{c}
x \\
y
\end{array}
\right)
=
\left(
\begin{array}{c}
x+y \\
y+ \sin x
\end{array}
\right)
\pmod{2\pi}.
\end{eqnarray}
The left-hand panel of Fig.~\ref{fig:StdMap_global} shows the
trajectories starting at a variety of different initial conditions
plotted in different colors. The shaded set is a large invariant chaotic set
with chaotic behavior, but many other invariant sets consist of
one or more topological circles, on which the system has quasiperiodic
behavior. For example, initial conditions $(\pi,1.65)$ leads to chaos
while $(\pi,1.5)$ leads to a quasiperiodic trajectory. As is clearly
the case here, one-dimensional
quasiperiodic sets often occur in families for non-linear processes,
structured like the rings of an onion. There are typically narrow bands of chaos between quasiperiodic onion rings. Usually these inner rings are differentiable
images of the $n$-torus, Yamaguchi and Tanikawa~\cite{StdMap1} and Chow et. al. in \cite{1halfDegree}
showed that the outermost limit (the onion's skin, to continue the analogy) will
still be a torus, but may not be differentiable. We have computed the rotation number 
for the standard map orbit shown in the right panel Fig.~\ref{fig:StdMap_global} using
quadruple precision. 
Its 30 digit 
value is given in Table~\ref{table:summary}.

{\bf The forced Van der Pol oscillator.} Fig.~\ref{fig:vdP_global} shows orbits for the time-$2\pi/0.83$ map of the
following periodically forced Van der Pol oscillator with nonlinear
damping~\cite{vdPol_forced}
\begin{equation}\label{vdPol_forced}
\frac{d^2 x}{dt^2}-0.2 \left( 1-x^2 \right) \frac{dx}{dt}+20 x^3=F\sin \left(0.83 t \right),
\end{equation}
with a variety of choices of $F$. While the innermost orbit shown is a
chaotic attractor, the outer orbits are topological circles with
quasiperiodic behavior\footnote{As with the standard map, we have
specified all non-essential parameters rather than stating the most
general form of the Van der Pol equation.}. Our computed the rotation number 
for the three orbits $F=15,25,35$ are given in Table~\ref{table:summary}. 

{\bf A two-dimensional torus map.} 
We now introduce an example of a two-dimensional quasiperiodic torus map on $\mathbb{T}^2$. This is a two-dimensional version of Arnold's family of 1D maps (see \cite{Arnold}), originally introduced in the following two papers~\cite{Grebogi:83,Grebogi:85}. The map is given by $(T_1,T_2)$ where 
\begin{eqnarray*}\label{eqn:2D_map}
T_1(x,y) = \left[ x+ \omega_1 + \frac{\epsilon}{2 \pi} P_1(x,y) ) \right] \pmod 1\\
T_2(x,y) = \left[ y+ \omega_2 + \frac{\epsilon}{2 \pi} P_2(x,y) \right] \pmod 1,
\end{eqnarray*}
and $P_i(x,y), i=1,2$ are periodic functions with period one in both variables, defined by:
\[
P_i(x,y) = \sum_{j=1}^4 a_{i,j} \sin (2 \pi \alpha_{i,j}), \mbox{ with } \alpha_{i,j}= r_j x + s_j y +b_{i,j}.
\]
The values of all coefficients are given in Table~\ref{table}. 
\begin{table}[t]
\begin{center}
{\begin{tabular}{ |c | c| }
\hline
Coefficient & Value \\
\hline
$\epsilon$ & $ 0.4234823$ \\[1pt]
$\omega_1$ & $ 0.71151134457776362264681206697006238 $ \\[2pt]
$\omega_2$ & $ 0.87735009811261456100917086672849971 $ \\[1pt]
$a_{1,j}$ & $(-0.268,-0.9106,0.3,-0.04)$ \\[1pt]
$a_{2,j}$ & $(0.08,-0.56,0.947,-0.4003)$ \\[1pt]
$b_{1,j}$ & $(0.985,0504,0.947,0.2334)$ \\[1pt]
$b_{2,j}$ & $(0.99,0.33,0.29,0.155)$ \\[1pt]
$r_j$ & $(1,0,1,0)$ \\[1pt]
$s_j$ & $(0,1,1,-1)$ \\[1pt]
Computed $\rho_1$ & $0.718053759982066107095244936117$ \\[1pt]
Computed $\rho_2$ & $0.885304666596099792113366824157$ \\[1pt]
\hline
\end{tabular}}
\end{center}
\caption{\label{table}
{\bf Coefficients for the torus map.} All values are used in quadruple precision, 
but in this table the repeated zeros on the end of the number are suppressed.}
\end{table}
This choice of this function is based on~\cite{Grebogi:83,
Grebogi:85}. Both papers use the same form of equation, though
the constants are close to but not precisely the same as the ones used previously. 
This is fitting with the point of view advocated in by these papers, in that the constants should be randomly chosen. Since we
are using higher precision, we have chosen constants
that are irrational to the level of our precision. The forward orbit is
dense on the torus, and the map is a nonlinear which exhibits two-dimensional quasiperiodic behavior.

Fig.~\ref{fig:2DMap_overview}a depicts iterates of the orbit, indicating that it is
dense in the torus. In order to verify that this map is indeed quasiperiodic,
we used a Birkhoff average to compute the two Lyapunov exponents to verify that both are 
zero ({\em cf.} Fig.~\ref{fig:2DMap_overview}b). In terms of method, this is just a matter of changing the function $f$ used in $\Q_N$ in Eq. \ref{eqn:QN}. 
Likewise, finding rotation numbers in two dimensions uses the same technique as in the one-dimensional case ({\em cf.} Fig.~\ref{fig:2DMap_overview}c).
In all of our calculations, the computation is significantly longer than in one dimension in order
to get the same accuracy, perhaps because in two dimensions, coverage of dense orbit varies like the square 
of the side length of the domain. 
\begin{figure}[t]
\centering
\subfigure[ ]{\includegraphics[height= .3\textwidth]{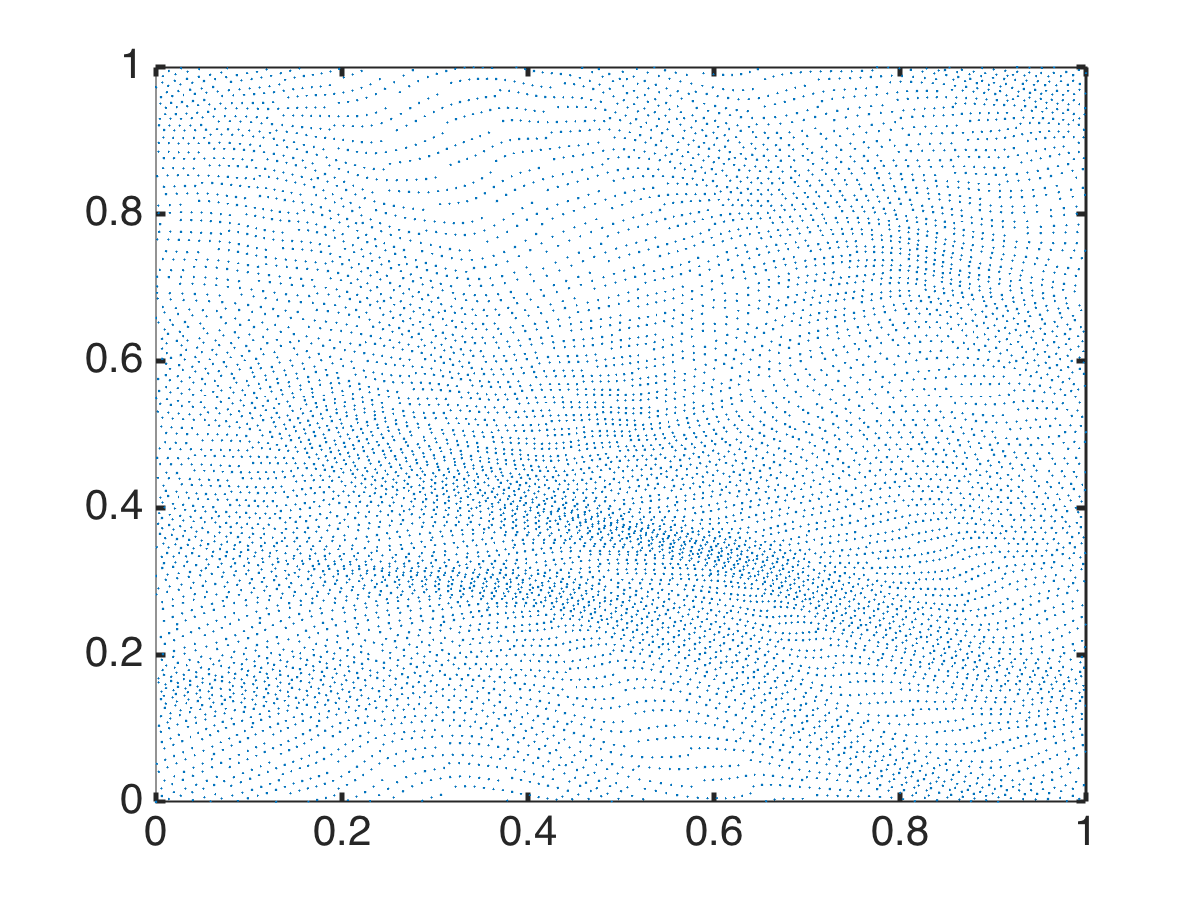}}
\subfigure[ ]{\includegraphics[height= .3\textwidth]{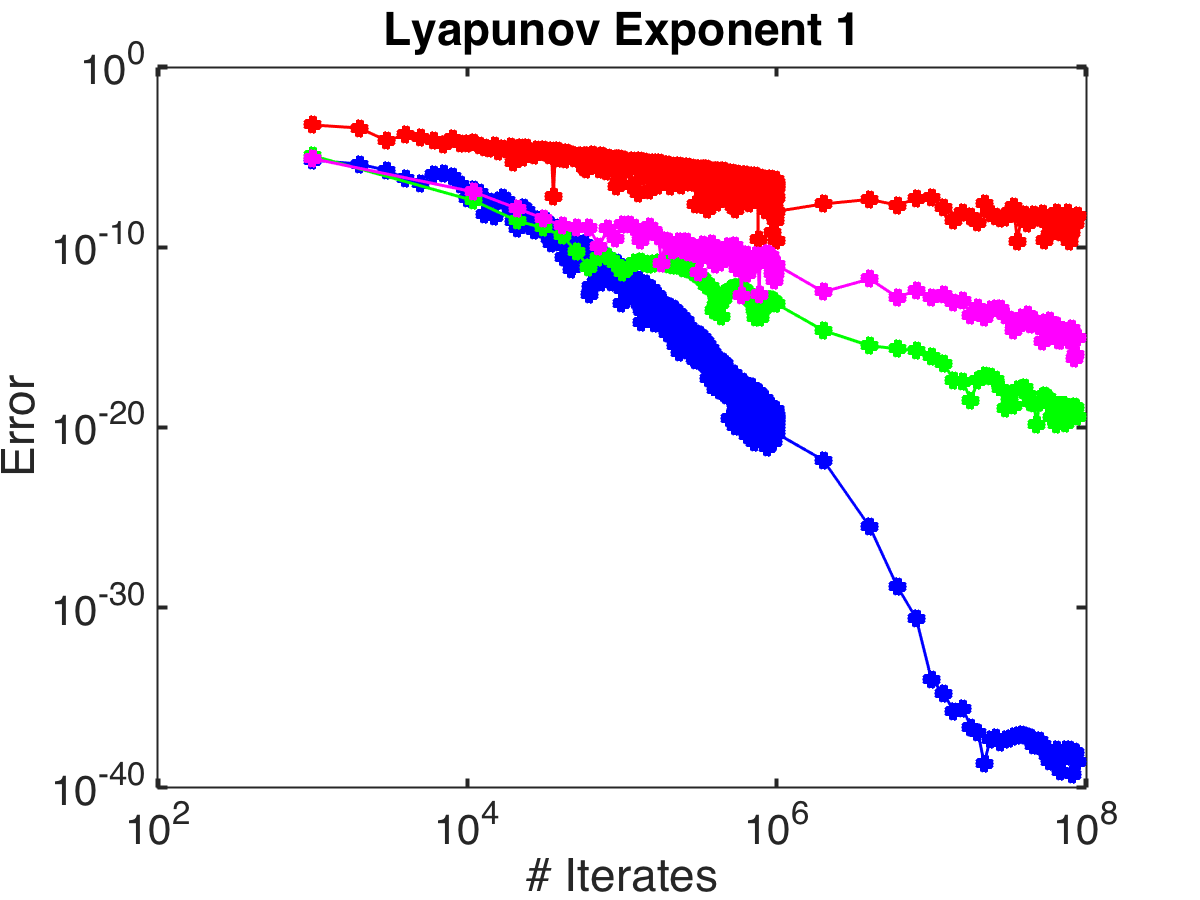}}
\subfigure[ ]{\includegraphics[height= .3\textwidth]{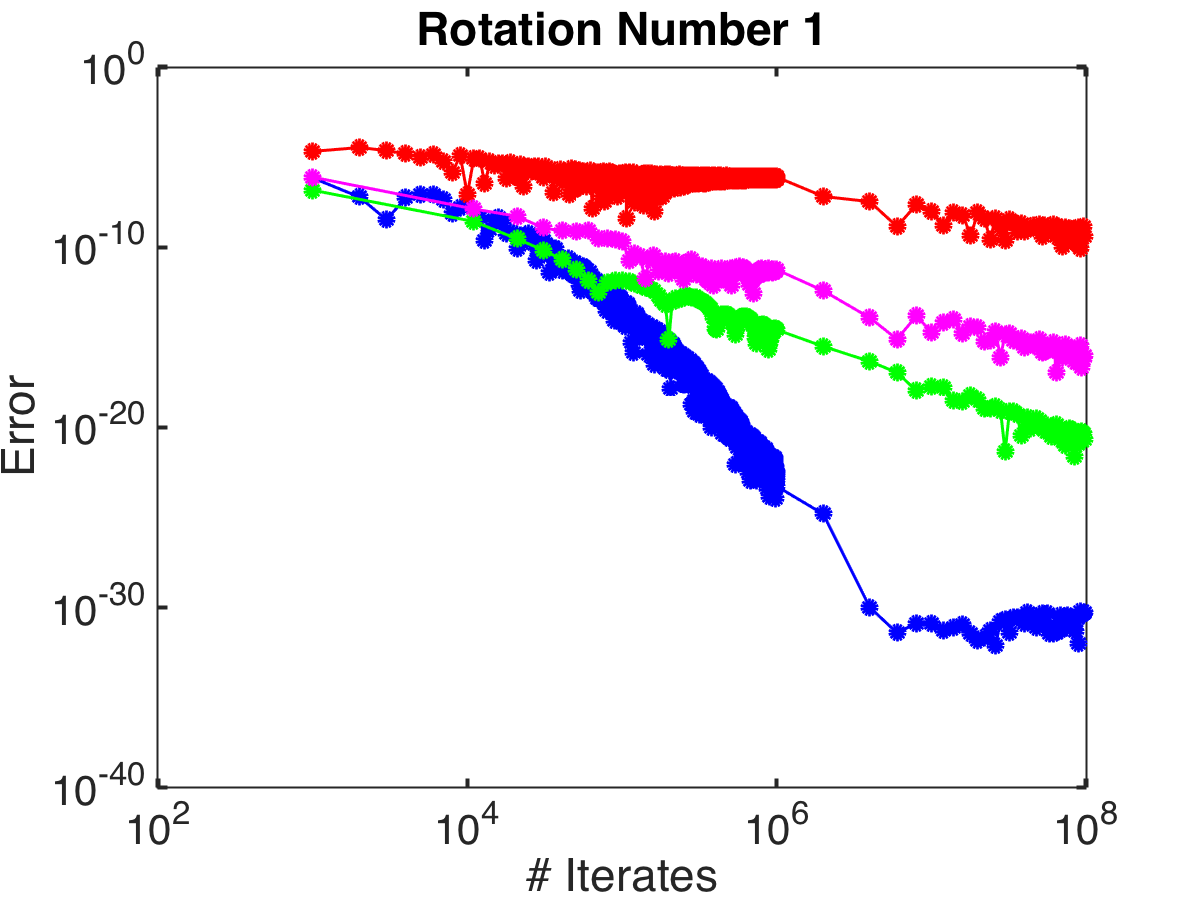}}
\caption{\textbf{Two-dimensional torus map.} 
Fig. (a) shows the first $10^{4}$ iterates of an orbit for the two-dimensional quasiperiodic 
torus map.
The orbit appears to be dense, indicating quasiperiodicity. This is
confirmed by computing the two Lyapunov exponents of the orbit using
Birkhoff averaging and finding them to be 0. The convergence of this
computation for one of the two Lyapunov exponents is shown in blue in
(b). The highest to lowest curves show the convergence rates resp.
for the first three weighting functions given in~(\ref{eqn:weights}).
Fig. (c) shows the convergence rate for the first rotation number for
the four different weighting functions. 
}
\label{fig:2DMap_overview}
\end{figure}
\begin{figure}[t]
\centering
\subfigure[ ]{\includegraphics[height=.4\textwidth] {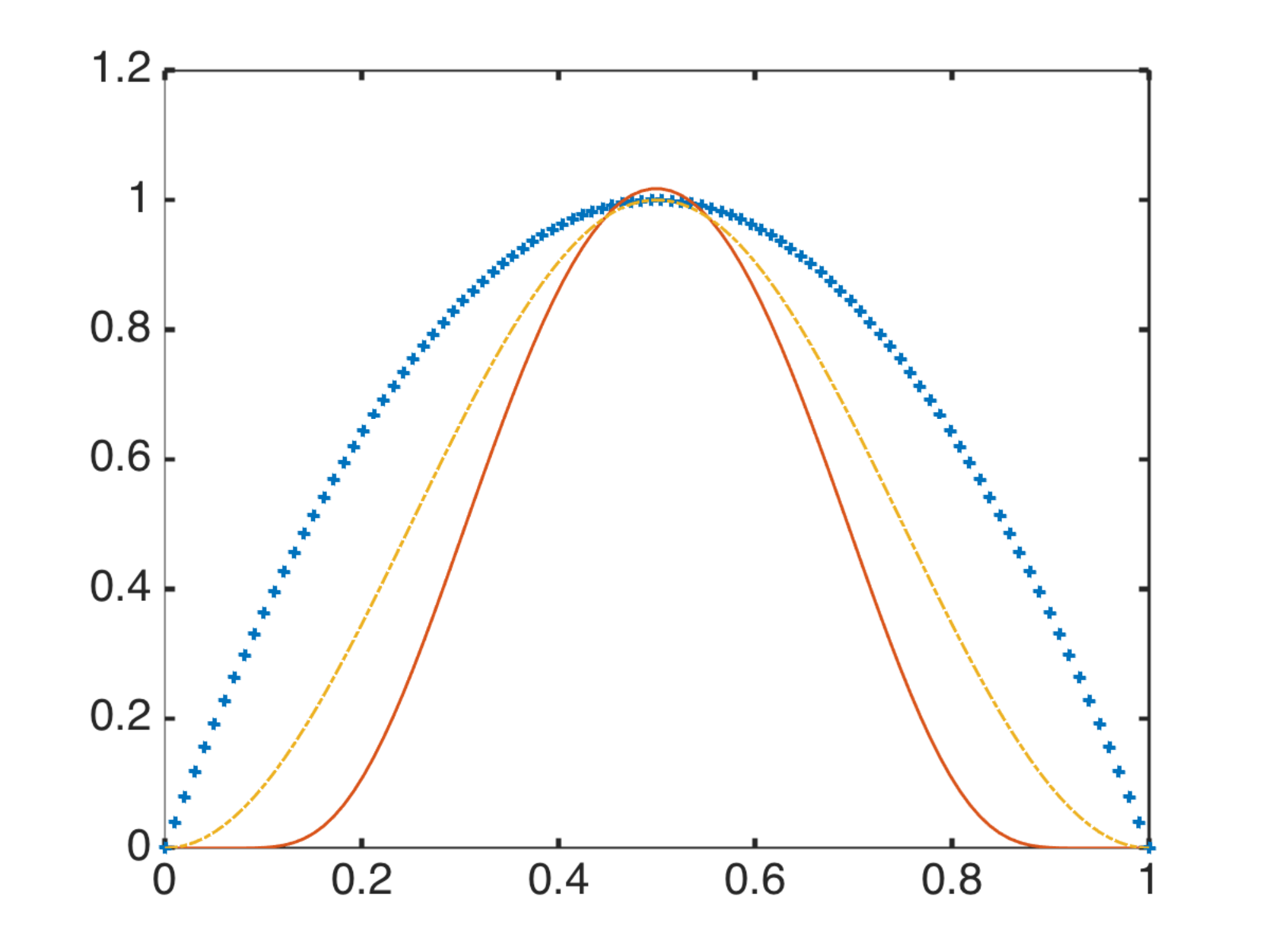}}
\subfigure[ ]{\includegraphics[height=.4\textwidth] {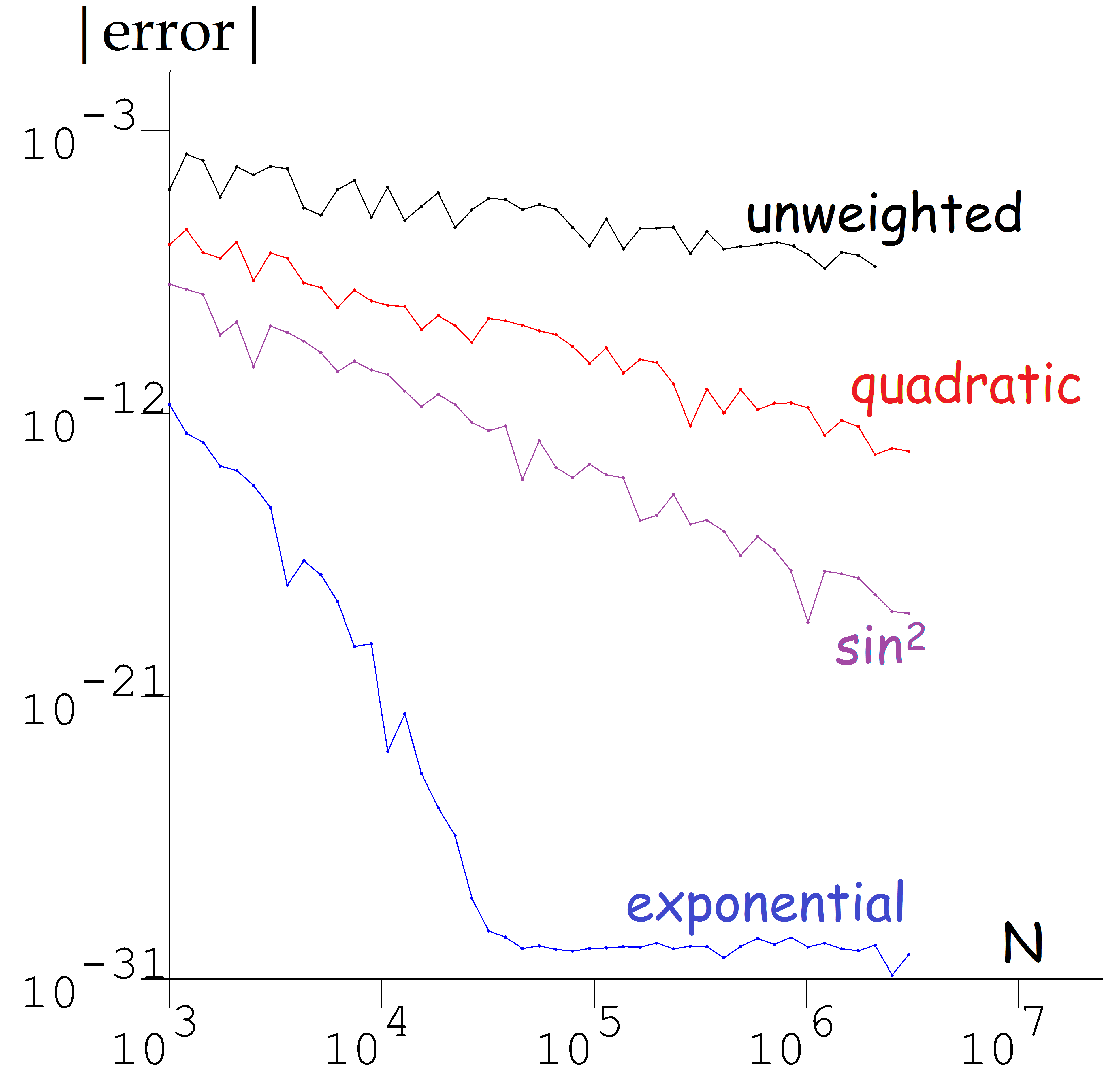}}
\caption{\textbf{Rate of convergence for different weight functions}. \label{fig:mid_circle}
Left is a plot of the three non-constant weighting functions from~(\ref{eqn:weights})
(quad blue, $\sin^2$ yellow, $\exp$ red). 
Since only the shape matters, they have been
rescaled so that each has a peak of approximately one.
For a given $w$ and a given number of iterates $N$, the rotation number
$\hat{\rho}$ approximation is calculated for $B_1$ of the restricted three-body problem, the error of the calculation is the
difference $|\rho-\hat{\rho}|$. The figure shows the convergence rate of
this error as a function of $N$. The exponential weight function is seen to be the
best method.
The error
cannot be reduced below $10^{-32}$ because that is the limit of
quadruple precision. }
\end{figure}
\begin{figure}[t]
\centering
\subfigure[ ]{\includegraphics[width = .46\textwidth]{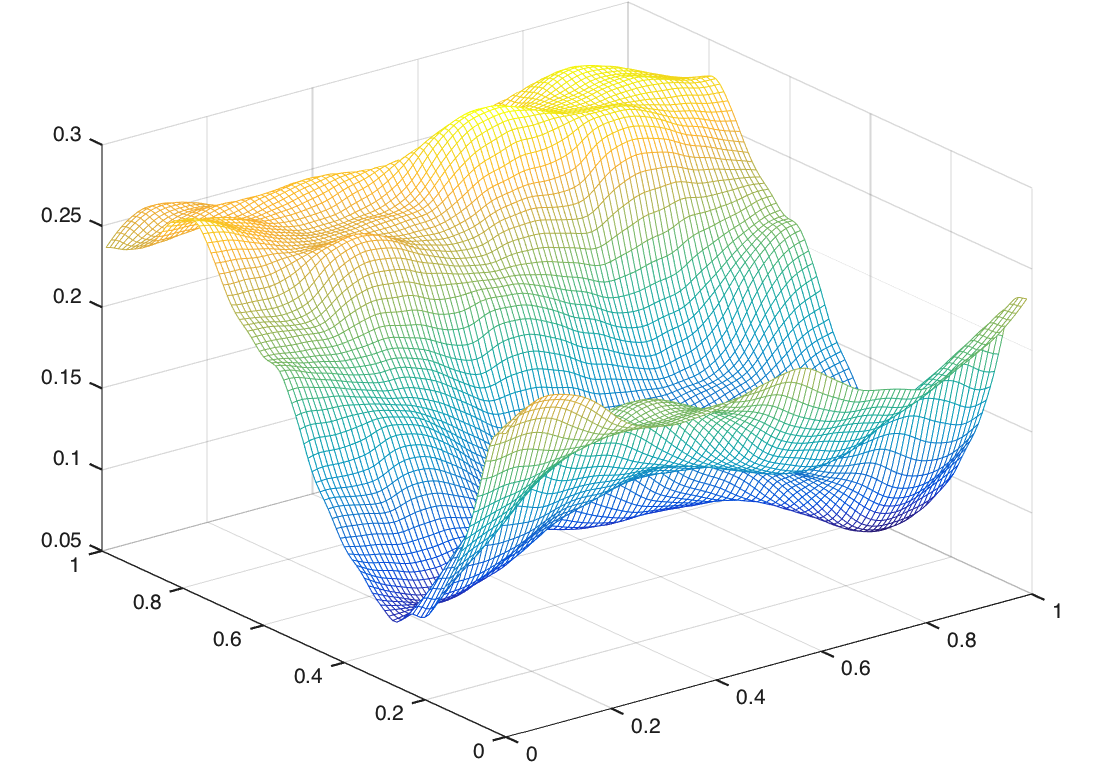}}
\subfigure[ ]{\includegraphics[width = .46\textwidth]{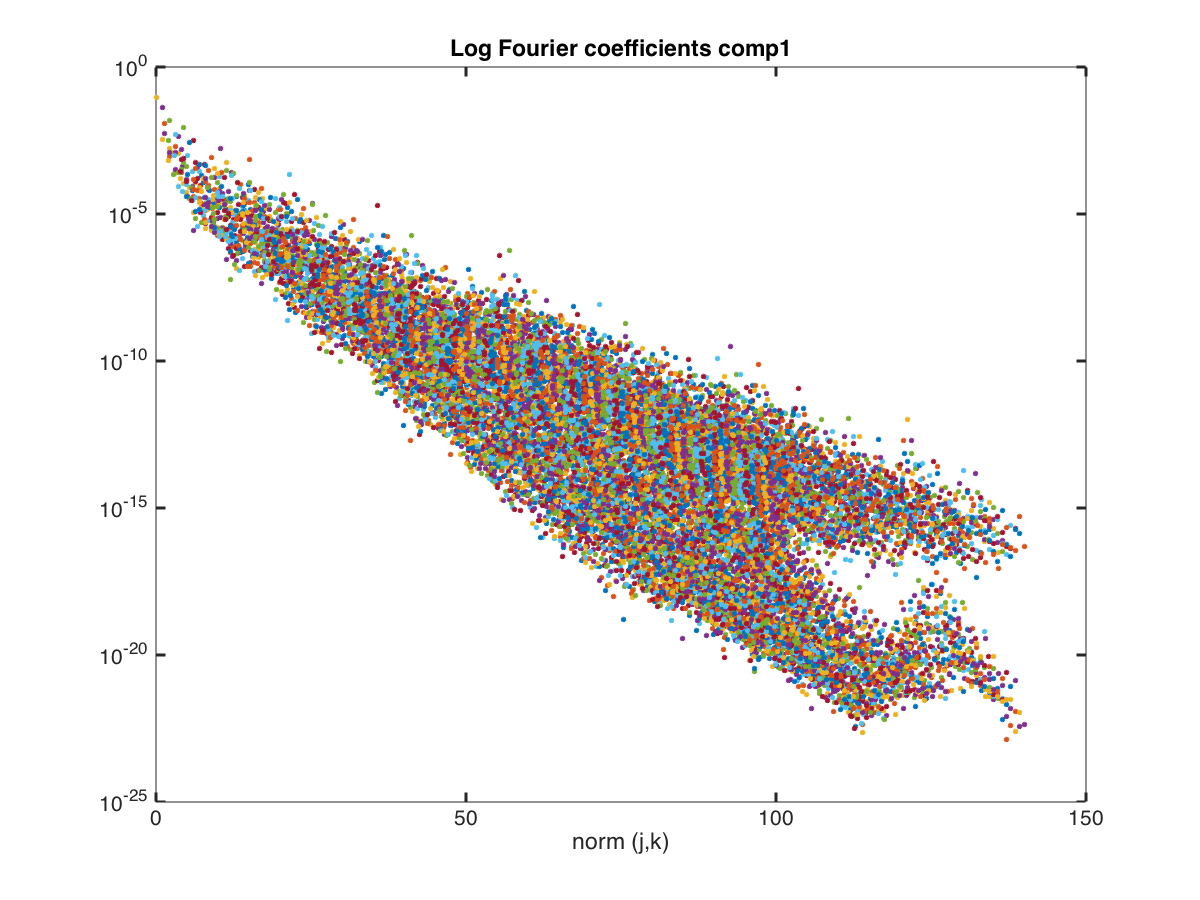}}
\caption{\textbf{Conjugacy for the torus.} \label{2DTorus_results}
Fig. (a) depicts the reconstruction of the periodic part $g$ (see \ref{eqn:g_conjufacy}) of the first component of the 
conjugacy function for the torus map. The second conjugacy function is similar but not depicted here. 
Fig. (b) shows the decay of the Fourier coefficients for this component of the conjugacy function on the
log-linear scale. 
}
\end{figure}

{\bf Convergence rate.}
In order to explain why the convergence of our method is so good, we introduce four different
possible values for the weighting function $w$, depicted in Fig.~\ref{fig:mid_circle}a, 
and compare the convergence results for computing 
the rotation number for each of these choices of $w$. 
\begin{eqnarray}\label{eqn:weights}
w_{equal}(t) & = & 1\mbox{ (Birkhoff's choice)}\\
w_{quad}(t) & = & t (1-t)\\
w_{(\sin^2)}(t) & = & \sin^2(\pi t) \\
w_{\exp}(t) & = & \exp\left({-1}/{(t(1-t))}\right).
\end{eqnarray}
If we compute with the first choice of $w$, we recover the a truncated
series in the definition of the Birkhoff average. To estimate the error, we 
expect the difference $f(x_{N+1})-f(x_{N})$ to be of order one, implying that 
\[\Q_{w_{equal},N+1} - \Q_{w_{equal},N} \sim 1/N.\]
The choice of a particular starting point also creates a similar uncertainty of order $1/N$.
For all but the first
choice, $w$ is always positive between $0$ and $1$ but vanishes as $t$
approaches $0$ and $1$. In addition, going down the list, increasing
number of derivatives of $w$ vanish for $t \to 0$ and $t\to 1$, with all derivatives of $w_{\exp}$ 
vanishing at $0$ and $1$. We thus expect the 
effect of the starting and endpoints to decay at the same rate as this number of 
vanishing derivatives. Indeed, we find that $w_{quad}$ corresponds approximately to order $1/N^2$ convergence,
$w_{(\sin^2)}$ to $1/N^3$ convergence, and $w_{\exp}$ to convergence faster than any polynomial. 
Figs.~\ref{fig:mid_circle}b and ~\ref{fig:2DMap_overview}bc show this effect.

{\bf Related methods}. See
\cite{seara:villanueva:06,luque:villanueva:14} for references to earlier
methods for computing rotation numbers. In 
\cite{seara:villanueva:06,luque:villanueva:14}, A. Luque and J.
Villanueva develop fast methods for obtaining rotation numbers for
analytic functions on a quasiperiodic torus, sometimes with
quasiperiodic forcing with several rotation numbers. The paper
\cite{luque:villanueva:14} examines a smooth function f on a
quasiperiodic torus. Let $f_n$ denote the value of f at the n-th
trajectory point. From this sequence they can obtain the rotation number
with error satisfying $|error| \le C_p N^{-p}$ for any $p$ where $C_p$
is a constant. The method of computation depends on $p$ and as $p$
increases the computational complexity increases for fixed $N$. If
$T(p,N)$ is their computation time, it appears that
$T(p,N)/N\rightarrow\infty$ as $p\rightarrow\infty$. In comparison, computation time for our weighted Birkhoff average is
simply proportional to $N$ since it requires a sum of $N$ numbers. The paper gives one figure (Fig 6) from which the rate of convergence can be computed, namely a restricted three-body problem. Their rotation-rate error is proportional to $N^{-3.5}$ and is $\approx 10^{-18}$ at $N = 2^{21}$.

Several variants of the Newton's
method have been employed to determine quasiperiodic trajectories in
different settings. In \cite{NumericQuasi3} the monodromy variant of
Newton’s method was applied to locate periodic or quasi-periodic
relative satellite motion. 
A PDE-based approach was
taken in \cite{NumericQuasi4}, where the authors defined an invariance
equation which involves partial derivatives. The invariant tori are
then computed using finite element methods. See also Chapter 2,
\cite{NumericQuasi4} for more references on the numerical computation of
invariant tori.

\subsection{Fourier coefficients and conjugacy reconstruction}\label{sec:Fourier}

For a quasiperiodic curve as shown in Fig. \ref{fig:results_3B}a, there are two approaches to representing the curve. Firstly, we can write the coordinates $(X,Y)$ as a function of $\theta\in S^1$, or secondly, we can reduce the dimension and represent the points on the curve by an angle $\phi\in S^1$, that is, $\phi(X(\theta),Y(\theta)$, which is also $h(\theta)=\theta+g(\theta)$. We have shown $g$ in Fig \ref{fig:results_3B}b and the exponential decay of the norm of the Fourier coefficients in Fig. \ref{fig:results_3B}c. To limit the number of graphs in this paper, we have only created the Fourier series for the periodic part $g(\theta)$

Given a continuous periodic map $f:S^1\rightarrow\mathbb{R}$, where $S^1$ is a circle (or 
one-torus), 
the Fourier sine and cosine representation of $f$ is the following.
\begin{equation}\label{eqn:sine_cosine_expansion}
\mbox{For every }t\in S^1,\ f(t)=\frac{b_0}{2}+ \sum_{k=1}^\infty b_k \cos(2k\pi t)+ \sum_{k=0}^\infty c_k \sin(2k\pi t)
\end{equation}
where the coefficients $b_k$ and $c_k$ are given by the following formulas.
\begin{equation}\label{eqn:a_n}
b_k = 2 \int_{\theta\in S^1} f(\theta) \cos(2 k \pi\theta) \; d\theta,
\end{equation}
\begin{equation}\label{eqn:b_n}
c_k = 2 \int_{\theta\in S^1} f(\theta) \sin(2 k \pi\theta)\; d\theta.
\end{equation}

To be able to use the fast Fourier transform, $2^M$ equally spaced points on the circle are required. If we only have access to an ergodic orbit $(x_n)$ on the circle, then we cannot use the fast Fourier transform as we only have the function values $f(x_n)$ along a quasiperiodic trajectory, and a rotation number $\rho$. So instead, we obtain these coefficients using a weighted Birkhoff average on a trajectory
$(x_n)$ by applying the functional $\Q_{w,N}$. For $k=0$, we find $a_0$ by applying $\Q$ to the function $1$. 
For $k>0$, we find $b_k$ and $c_k$ as follows. 
\begin{equation}\label{eqn:a_n_from_Q}
b_k = \Q_{w,N} (f(\theta) \cos(2 k \pi\theta)) = \sum_{n=0}^N f(x_n) \cos(2k\pi n\rho) \hat{w}_{n,N}.
\end{equation}
\begin{equation}\label{eqn:b_n_from_Q}
c_k = \Q_{w,N} (f(\theta) \sin (2 k \pi\theta))= \sum_{n=0}^N f(x_n) \sin(2k\pi n\rho) \hat{w}_{n,N}.
\end{equation}
By specifying that $\theta_0 = 0$, our computation of rotation number $\rho$ provides all iterates : $\theta_n = n \rho$. Using the Fourier coefficients, we can 
thus reconstruct the periodic part of the change of variables function $g$ (see Eq. \ref{eqn:g_conjufacy}). 
This is depicted for the restricted three-body problem in Fig.~\ref{fig:3B2}, 
for the standard map in Fig.~\ref{fig:StdMap}, for the forced van der Pol equation in 
Fig.~\ref{fig:vdP_global}. 
In all three one-dimensional cases, we depict $\sqrt{b_k^2 + c_k^2}$ as a function of $k$. 
Our main observation is that the Fourier
coefficients decay exponentially; that is, for some positive numbers $\alpha$
and $\beta$, in dimension one, the Fourier coefficients $b_k$ and $c_k$ satisfy
\begin{equation}\label{eqn:Fourier_decay}
\sqrt{|b_k|^2+|c_k|^2} \le \alpha e^{- \beta |k|} \mbox{ for all }k\in\mathbb{Z}. 
\end{equation}
This is characteristic of analytic functions. We therefore state that all of the conjugacy functions 
that we computed in our examples are effectively analytic,
``effectively'' meaning within the precision of our quadruple precision
numerics. 

In two dimensions, the computation of Fourier coefficients is similar,
but instead of only having one set of cosine and sine functions, for
each $(j,k)$, we have two linearly independent sets of complex-valued functions, 
where $i = \sqrt{-1}$: 
\[ e^{i (j x + k y)} \mbox{ and } e^{i (j x - k y)}. \]
We define $a_{j,k}$ and $b_{j,k}$
to be the complex-valued coefficients corresponding to each of these
functions. The reconstructed conjugacy function and decay of
coefficients for the two-dimensional torus is depicted in
Fig.~\ref{2DTorus_results}. The decay of coefficients shows $\sqrt{j^2 +
k^2}$ on the horizontal axis, and $|b_{j,k}|$ and $|c_{j,k}|$ on the
vertical axis, where both of these coefficients are complex, meaning
that $|\cdot|$ represents the modulus. Again here, the coefficients
decay exponentially, though the decay of coefficients is considerable
slower in two dimensions due to the added dimension. The data looks 
quite a lot more crowded in this case, since there are many different 
values of $(j,k)$ such that the values of $\sqrt{j^2 + k^2}$ are identical or
very close. In addition, the two sets of
coefficients $b_{j,k}$ and $c_{j,k}$ generally converge at different exponential
rates. This is why there is a strange looking set of two different
clouds of data in Fig.~\ref{2DTorus_results}b. While more information on
the difference between these coefficients is gained by interactively
viewing the data in three dimensions, we have not been able to find a
satisfactory static flat projection of this data. We feel that in a still image, 
the data cloud shown conveys the maximum information. 

\textbf{Accuracy of the calculation of Fourier coefficients}. Our method of calculation of the Fourier coefficients is dependent on the knowledge of the rotation vector $\vec\rho$, or an approximation $\hat{\vec\rho}$. For the rest of the chapter, $m\in\mathbb{N}$ is some fixed integer and $M$ is some integer satisfying $M>d+m(d+\beta)$, where $\beta$ is the Diophantine class of $\vec\rho$. Let $\Delta\vec\rho$ be the error in the approximation of $\rho$, that is,
\[\Delta\vec\rho:=\hat{\vec\rho}-\vec\rho.\]
We are interested in knowing how the error in the calculation of Fourier coefficients $b_k, c_k$ depend on $\Delta\vec\rho$. Let for every $k\in\mathbb{Z}^d$, \boldmath $f_k(\theta)$ \unboldmath $:=e^{i 2\pi k\cdot\theta}$. Then every periodic function $g$ has the complex Fourier series representation
\[g(\theta)=\underset{k\in\mathbb{Z}^d}{\Sigma}a_k f_k,\mbox{ where }a_k=\int_{\torus}g(\theta)f_{-k}(\theta)\]
Therefore, $a_k$ can be approximated as
\[\hat a_k=\Q_{N,\hat{\vec\rho}}(g(\theta)f_{-k}(\theta))=\sum\limits_{n=0}^{N-1}g(n\vec\rho)e^{-i 2\pi n k\cdot \hat{\vec\rho}}\hat{w}_{n,N}.\]

\begin{corollary}\label{cor:error_with_rho}
If $N\|k\cdot\Delta\vec\rho\|<<1$, then for each $m\in\mathbb{N}$, there exists a constant $C_{g,w,m}>0$ such that 
\[\frac{|\hat a_k(g)-a_k(g)|}{|a_k(g)|}\leq \pi N |k\cdot \Delta\vec\rho|+C_{g,w,m}N^{-m}.\]
\end{corollary}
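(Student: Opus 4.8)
The plan is to split the total error $\hat a_k-a_k$ into the part caused by substituting the approximate rotation $\hat{\vec\rho}$ for the true $\vec\rho$ and the part that is the intrinsic weighted-Birkhoff discretization error. Writing $G(\theta):=g(\theta)f_{-k}(\theta)$ and introducing the exact-rotation average $\tilde a_k:=\sum_{n=0}^{N-1}G(n\vec\rho)\,\hat w_{n,N}$, I would apply the triangle inequality $|\hat a_k-a_k|\le|\hat a_k-\tilde a_k|+|\tilde a_k-a_k|$. The second term is precisely the quantity controlled by the super-convergence theorem of \cite{Das-Yorke} applied to the $C^\infty$ function $G$: it is bounded by $C_mN^{-m}$ for every $m$, and after dividing by $|a_k|$ this yields the $C_{g,w,m}N^{-m}$ term of the stated bound.

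The heart of the matter is the first term. Since $k\cdot\hat{\vec\rho}=k\cdot\vec\rho+k\cdot\Delta\vec\rho$, I factor the summand to obtain
\[\hat a_k-\tilde a_k=\sum_{n=0}^{N-1}G(n\vec\rho)\bigl(e^{i\delta_n}-1\bigr)\hat w_{n,N},\qquad \delta_n:=-2\pi n\,(k\cdot\Delta\vec\rho).\]
I then expand $e^{i\delta_n}-1=i\delta_n+\bigl(e^{i\delta_n}-1-i\delta_n\bigr)$. The remainder obeys $|e^{i\delta_n}-1-i\delta_n|\le\tfrac12\delta_n^2\le2\pi^2N^2|k\cdot\Delta\vec\rho|^2$, so after dividing by $|a_k|$ its contribution is of order $(N|k\cdot\Delta\vec\rho|)^2$, which under the hypothesis $N\|k\cdot\Delta\vec\rho\|\ll1$ is negligible beside the linear term $\pi N|k\cdot\Delta\vec\rho|$ that accompanies it; it is this leading linear behaviour that the corollary records.

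It remains to evaluate the linear term $i\sum_n\delta_n G(n\vec\rho)\hat w_{n,N}=-2\pi i\,(k\cdot\Delta\vec\rho)\sum_n n\,G(n\vec\rho)\hat w_{n,N}$. The key computation is that the first-moment weighted sum converges super-fast to $\tfrac12 a_k$, which I would establish by writing $\sum_n(n/N)G(n\vec\rho)\hat w_{n,N}=\bigl(\sum_n v(n/N)G(n\vec\rho)/\sum_n v(n/N)\bigr)\cdot\bigl(\sum_n v(n/N)/\sum_j w(j/N)\bigr)$ with $v(t):=t\,w(t)$. The first factor is a weighted Birkhoff average with the $C^\infty$ weight $v$ (all of whose derivatives still vanish at $0$ and $1$), hence converges to $a_k$ by the companion-paper estimate; the second factor converges, again faster than any polynomial by Euler--Maclaurin, to $\int_0^1 tw\,dt/\int_0^1 w\,dt=\tfrac12$, where the value $\tfrac12$ uses the symmetry $w(t)=w(1-t)$ of the exponential weight. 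Thus $\sum_n n\,G(n\vec\rho)\hat w_{n,N}=\tfrac{N}{2}a_k+O(N^{1-m})$, the linear term equals $-\pi iN(k\cdot\Delta\vec\rho)a_k$ up to a correction of size $O(|k\cdot\Delta\vec\rho|N^{1-m})$, and since $N^{1-m}|k\cdot\Delta\vec\rho|=N^{-m}\,(N|k\cdot\Delta\vec\rho|)\ll N^{-m}$ this correction is absorbed into $C_{g,w,m}N^{-m}$; dividing by $|a_k|$ then gives the leading bound $\pi N|k\cdot\Delta\vec\rho|$.

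The main obstacle is the super-fast evaluation of this first moment: one must verify that the super-convergence of \cite{Das-Yorke} genuinely applies to the reweighted average with weight $v=tw$, not merely to $w$ itself, and that the symmetry of $w$ produces the clean constant $\tfrac12$ (hence the coefficient $\pi$ rather than $2\pi$). Everything else---the triangle-inequality split, the first-order Taylor expansion, and the absorption of the quadratic-in-$N|k\cdot\Delta\vec\rho|$ remainder into the linear term under the smallness hypothesis---is routine bookkeeping.
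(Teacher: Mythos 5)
Your argument is essentially correct but follows a genuinely different route from the paper's. You split the error as (rotation error) $+$ (discretization error) via the intermediate quantity $\tilde a_k$, treat the whole product $G=gf_{-k}$ at once, Taylor-expand $e^{i\delta_n}-1$, and extract the coefficient $\pi N|k\cdot\Delta\vec\rho|$ from a first-moment identity $\sum_n n\,G(n\vec\rho)\hat w_{n,N}\approx\tfrac{N}{2}a_k$, proved by reweighting with $v(t)=t\,w(t)$ and using the symmetry $w(t)=w(1-t)$. The paper instead expands $g=\sum_l a_l f_l$ into Fourier modes first and treats each mode separately: the resonant mode $l=k$ yields the linear term via the exact inequality $|e^{i\alpha}-1|\le|\alpha|$ together with $\sum_n n\,\hat w_{n,N}\approx N/2$ (the same first-moment fact you need, so your ``main obstacle'' is equally present, if unacknowledged, in the paper), while every off-resonant mode $l\ne k$ is bounded by combining the Diophantine property of $\vec\rho$ at the \emph{perturbed} frequencies $l\cdot\vec\rho-k\cdot\hat{\vec\rho}$ with the single-exponential estimate (Eq.~14 of the companion paper), then summed using $|a_l|\le C\|l\|^{-M}$ and $M>d+m(d+\beta)$. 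Your decomposition buys a cleaner black-box use of the super-convergence theorem (applied once to $G$ at the unperturbed rotation, so no Diophantine analysis of shifted frequencies is needed), at the cost of having to justify super-convergence for the modified weight $v$ --- which does go through, since $v$ is $C^\infty$ with all derivatives vanishing at $0$ and $1$.

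The one place where your route is genuinely weaker is the quadratic remainder. Your second-order term is bounded by $2\pi^2(N|k\cdot\Delta\vec\rho|)^2$ times (a weighted average of) $|G|$, i.e.\ by a multiple of $\|g\|_\infty$, so after dividing by $|a_k|$ it contributes $\sim(N|k\cdot\Delta\vec\rho|)^2\,\|g\|_\infty/|a_k|$. Since $|a_k|$ decays rapidly in $k$, this is dominated by the linear term $\pi N|k\cdot\Delta\vec\rho|$ only when $N|k\cdot\Delta\vec\rho|\lesssim|a_k|/\|g\|_\infty$, which is strictly stronger than the stated hypothesis $N\|k\cdot\Delta\vec\rho\|\ll1$. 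The paper's mode-by-mode argument avoids this entirely: the resonant mode is handled with the exact bound $|e^{i\alpha}-1|\le|\alpha|$ (no second-order remainder at all), and that bound arrives already multiplied by $|a_k|$, so the normalization is harmless. You should either use the exact inequality in place of the Taylor expansion when bounding the modulus (accepting $\pi N|k\cdot\Delta\vec\rho|$ as an upper bound for the full $|e^{i\delta_n}-1|$ contribution of the resonant part only), or state explicitly that your bound is a leading-order asymptotic valid for fixed $k$.
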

\begin{proof}
We begin by obtaining bounds on the error in calculating the Fourier coefficients of pure exponentials. Note that $a_k(f_l)$ equals $1$ if $l=k$, equals $0$ if $l\neq k$. We will estimate the error $|\hat a_k(f_l)-a_k(f_l)|$ in each case.
\[|\hat a_k(f_k)=\Q_{N,\hat{\vec\rho}}(f_k(\theta)f_{-k}(\theta))| =\sum\limits_{n=0}^{N-1}e^{-i 2\pi n k\cdot \Delta\vec\rho}\hat{w}_{m,N}.\]
Then if $N\|k\cdot\Delta\vec\rho\|<<1$, then using the approximation $e^{i\alpha}-1\approx i\alpha$, one can say that
\[\begin{split}
|\hat a_k(f_k)-a_k(f_k)|&=|\hat a_k(f_k)-1|\\
&=\left|\sum\limits_{n=0}^{N-1}(e^{-i 2\pi n k\cdot \Delta\vec\rho}-1)\hat{w}_{m,N}\right|\\
&\leq\sum\limits_{n=0}^{N-1}|e^{-i 2\pi n k\cdot \Delta\vec\rho}-1|\hat{w}_{m,N}\\
&\leq\sum\limits_{n=0}^{N-1}2\pi n k\cdot |\Delta\vec\rho|\hat{w}_{m,N}\\
&=\pi N |k\cdot \Delta\vec\rho|.\\
\end{split}\]
Since $\vec\rho$ is Diophantine with Diophantine class $\beta\geq 0$, for every $\alpha\in\mathbb{R}$, there exists a constant $C_{\vec\rho}>0$ such that for all $l\in\mathbb{Z}^d$ with $\|l\|$ sufficiently big, one has
\begin{equation}\label{eqn:shifted_Dioph}
|\exp(i2\pi(l\cdot\vec\rho-\alpha)-1)|=|\exp(i2\pi(l\cdot\vec\rho-\alpha)-\exp(i2\pi \alpha))|\geq C_{\vec\rho}\|l\|^{-(d+\beta)}
\end{equation}
By Eq. 14 from \cite{Das-Yorke}, for every $l\neq k$ there exists a constant $C_{w,m}>0$ depending on $w$ and $m$ such that for every $\alpha\in\mathbb{R}$
\begin{equation}\label{eqn:DY_14}
\left|\sum\limits_{n=0}^{N-1}\exp(-i 2\pi n \alpha)\hat{w}_{m,N}\right| \leq C_{w,m}N^{-m}|e^{i 2\pi \alpha}-1|^{-m}
\end{equation}
Combining Eqs. \ref{eqn:shifted_Dioph} and \ref{eqn:DY_14}, we get
\[\begin{split}
|\hat a_k(f_l)-a_k(f_l)|&=|\hat a_k(f_l)|\\
&=\left|\sum\limits_{n=0}^{N-1}\exp(-i 2\pi n (l\cdot \vec\rho-k\cdot \hat{\vec\rho}))\hat{w}_{m,N}\right|\\
&\leq C_{w,m}N^{-m}|\exp(i 2\pi (l\cdot \vec\rho-k\cdot \hat{\vec\rho}))-1|^{-m}\\
&\leq C_{w,m}C_{\vec\rho}N^{-m}\|l\|^{m(d+\beta)}\\
\end{split}\] 

Since $f\in C^{M}$, there exists $C_{g,M}>0$ depending on $g$ and $M$ such that for each $l\neq \vec 0$, $|a_l|\leq C_{f,M} \|l\|^{-M}$. Therefore, in a manner similar to the derivation of Eq. 15 from \cite{Das-Yorke}, we can write
\[\begin{split}
|\hat a_k(g)-a_k(g)|&=\left|\underset{l\in\mathbb{Z}^d}{\Sigma}a_l(a_k(f_l)-a_k(f_l))\right|\\
&\leq |a_k(a_k(f_k)-a_k(f_k))|+\left|\underset{l\neq k}{\Sigma}a_l(a_k(f_l)-a_k(f_l))\right|\\
&\leq |a_k|\pi N |k\cdot \Delta\vec\rho|+\underset{l\neq k}{\Sigma}|a_l(a_k(f_l)-a_k(f_l))|\\
&\leq |a_k|\pi N |k\cdot \Delta\vec\rho|+C_{w,m}C_{\vec\rho}N^{-m}\underset{l\neq k}{\Sigma}|a_l|\|l\|^{m(d+\beta)}\\
&\leq |a_k|\pi N |k\cdot \Delta\vec\rho|+C_{w,m}C_{\vec\rho}C_{f,M}N^{-m}\underset{l\neq k}{\Sigma}\|l\|^{-M}\|l\|^{m(d+\beta)}\\
&= |a_k|\pi N |k\cdot \Delta\vec\rho|+C_{w,m}C_{\vec\rho}C_{f,M}N^{-m}\underset{l\neq k}{\Sigma}\|l\|^{-(M-m(d+\beta))}\\
\end{split}\]
Since $M>d+m(d+\beta)$, the sum $\underset{l\neq k}{\Sigma}\|l\|^{-(M-m(d+\beta))}<\infty$. The statement of the corollary now follows. \qed
\end{proof}

\clearpage

\bibliographystyle{unsrt}
\bibliography{Weighted_calc_bibliography,1D_bibliography}
\end{document}